\newtheorem{lemma}{Lemma}
\newtheorem{proposition}{Proposition}
\newtheorem{observation}{Observation}
\theoremstyle{definition}
\newtheorem{conjecture}{Conjecture}
\newtheorem{con}{Conjecture}
\tikzstyle{graphnode}=[draw,shape=circle,draw=black,minimum size=0.5pt,inner sep=1.5pt]
\title{Cuts in matchings of 3-connected cubic graphs}
\author{Kolja Knauer\thanks{Aix Marseille Univ, Universit\'e de Toulon, CNRS, LIS, Marseille, France.\newline
\indent Email: \texttt{\{kolja.knauer,petru.valicov\}@lis-lab.fr}}
\and Petru Valicov\footnotemark[1]}
\begin{document}
\maketitle
\begin{abstract}
We discuss conjectures on Hamiltonicity in cubic graphs (Tait, Barnette, Tutte), on the dichromatic number of planar oriented graphs (Neumann-Lara), and on even graphs in digraphs whose contraction is strongly connected (Hochst\"attler). We show that all of them fit into the same framework related to cuts in matchings. This allows us to find a counterexample to the conjecture of Hochst\"attler and
 show that the conjecture of Neumann-Lara holds for all planar graphs on at most 26 vertices. Finally, we state a new conjecture on bipartite cubic oriented graphs, that naturally arises in this setting.
\end{abstract}

\section{Introduction}
Let us first introduce three conjectures on 3-connected, cubic graphs in their order of appearance.

\begin{conjecture}[Tait 1884]
\label{conj:Tait}
Every 3-connected, cubic, planar graph contains a Hamiltonian cycle.
\end{conjecture}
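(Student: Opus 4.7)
Before proposing a proof strategy, I should pause to note that Tait's conjecture is in fact false: Tutte refuted it in 1946 by exhibiting a 3-connected cubic planar graph with no Hamilton cycle, now called the Tutte graph (on $46$ vertices). So there is no proof to sketch, and the only honest ``proposal'' is to describe the strategy behind the known counterexample, which I outline below.

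The key gadget is a \emph{Tutte fragment}: a planar subgraph with three distinguished degree-$2$ attachment vertices on its outer face, all other vertices of degree $3$, designed so that its Hamilton-path behaviour is tightly constrained. Specifically, one arranges that in any cubic planar extension, no Hamilton cycle can enter and leave the fragment through certain prescribed combinations of the three ports; one of the ports is ``forbidden'' as an interior passage. The plan is then to glue three such fragments around a small central structure (a vertex or short gadget) in a $3$-connected cubic planar manner, and to argue that any putative Hamilton cycle of the whole graph, restricted to a single fragment, must induce a traversal forbidden by the fragment's property. By a pigeonhole across the three fragments, some fragment is traversed in a forbidden way; contradiction.

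The main obstacle is the design and verification of the fragment itself: one has to establish the Hamilton-path constraint by an exhaustive case analysis, using planarity to restrict how a Hamilton path through the fragment can thread its interior cubic structure between the ports. The gluing step is comparatively mechanical, but one still has to confirm that $3$-connectedness survives the identifications and that the resulting graph is cubic and planar.

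Since the present paper places Hamiltonicity conjectures in a common framework concerning cuts in matchings of $3$-connected cubic graphs, the natural reformulation to aim at is: the Tutte fragment carries a perfect matching whose associated edge cut in any $3$-connected cubic planar extension obstructs a Hamilton cycle. This is how Tait's (disproved) conjecture is expected to slot into the matching-cut viewpoint developed later in the paper, and not as a theorem to be proved.
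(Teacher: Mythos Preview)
Your assessment is correct: Tait's conjecture is false, and the paper treats it as such. Immediately after stating it, the paper records Tutte's 1946 counterexample on $46$ vertices and the Holton--McKay result that $38$ vertices is the minimum; it offers no proof and no construction of the counterexample, so your sketch of the Tutte-fragment idea already goes beyond what the paper provides.

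One correction to your final paragraph. The paper's matching-cut reformulation is that a cubic graph has a Hamiltonian cycle if and only if it has a perfect matching containing no edge-cut. Hence a counterexample to Tait is a $3$-connected cubic planar graph in which \emph{every} perfect matching contains an edge-cut --- a property of the whole graph, quantified over all perfect matchings. Your phrasing (``the Tutte fragment carries a perfect matching whose associated edge cut \ldots\ obstructs a Hamilton cycle'') inverts both the quantifier and the scope: it is not that some matching of the fragment produces a cut, but that in the assembled graph no perfect matching avoids one.
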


The first counterexample to Conjecture~\ref{conj:Tait} was given by Tutte~\cite{T46} and is a graph on 46 vertices. Several smaller counterexamples on 38 vertices were later found by Holton and McKay, who also proved that there is no counterexample with less than 38 vertices~\cite{HMcK88}. 
One can observe that all known counterexamples to Tait's conjecture have odd cycles. Maybe this is essential:

\begin{conjecture}[Barnette 1969~\cite{B69}]
\label{conj:Barnette}
Every 3-connected, cubic, planar, bipartite graph contains a Hamiltonian cycle.
\end{conjecture}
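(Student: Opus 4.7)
The plan is to reformulate Conjecture~\ref{conj:Barnette} in the cuts-in-matchings language that unifies this paper. In a cubic graph $G$, the complement $G-M$ of a perfect matching $M$ is a $2$-regular spanning subgraph, and it is a single Hamiltonian cycle precisely when $G-M$ is connected, equivalently when $M$ contains no edge cut of $G$. So the conjecture is equivalent to the statement that every $3$-connected cubic bipartite planar graph admits a \emph{cut-free} perfect matching, i.e.\ one that contains no edge cut of $G$.

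First I would collect what comes essentially for free. Such a graph has a perfect matching by Petersen's theorem, and in fact has three pairwise disjoint perfect matchings $M_{1},M_{2},M_{3}$ partitioning its edges, since cubic bridgeless planar graphs are $3$-edge-colourable by Tait's equivalence with the Four Colour Theorem. A matching cannot contain all three edges at a vertex, so the trivial $3$-cuts are already harmless; the true obstructions are essential $3$-edge cuts and, more generally, the odd edge cuts separating components of $G-M_{i}$. The next step is to argue, using bipartiteness and planarity, that one of the three $2$-factors $M_{j}\cup M_{k}$ is connected; should that fail, one would try a counting or exchange argument on the exponentially many perfect matchings guaranteed in a bipartite cubic graph by permanental lower bounds.

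The main attempt would then be a reducibility/discharging scheme on the planar dual, which is a planar Eulerian triangulation: locate a bounded configuration whose contraction preserves planarity, bipartiteness, cubicness and $3$-connectedness in the primal, and whose cut-free matching lifts back. The hard part is exactly this step: Conjecture~\ref{conj:Barnette} has been open since 1969 and no such reduction has ever been found. Dropping any one of the four hypotheses produces explicit counterexamples, so any proof must exploit all of them simultaneously, and the cuts-in-matchings reformulation by itself does not obviously supply the missing leverage. The most promising direction consistent with the framework developed later in this paper is to translate cut-free matchings into the dichromatic or even-graph language in which the other conjectures are phrased, where the parity constraints forced by bipartiteness appear best suited to drive an induction.
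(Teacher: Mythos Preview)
The statement you were asked to prove is Barnette's conjecture, which is \emph{open}. The paper does not prove it and does not claim to; it explicitly says ``In general Conjecture~\ref{conj:Barnette} remains open.'' All the paper does with Barnette is exactly the reformulation you give in your first paragraph: in a cubic graph, a Hamiltonian cycle is the complement of a perfect matching containing no edge cut, so Barnette is equivalent to ``every $3$-connected cubic bipartite planar graph has a perfect matching without a cut.'' That is the entire content the paper attaches to this statement; it is used only to place Barnette in the diagram of Figure~\ref{fig:diagram}.

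Your proposal is therefore not in conflict with the paper, but it is also not a proof. You correctly carry out the reformulation, then list some peripheral facts (Petersen, $3$-edge-colourability via the Four Colour Theorem), and then outline a reducibility/discharging programme whose key step you yourself identify as missing (``no such reduction has ever been found''). That is an honest assessment of the state of the art, but it means there is no proof here to evaluate. The additional suggestion to pass to the dichromatic/even-graph language is precisely what the paper does for the \emph{other} conjectures; for Barnette specifically this translation does not yield anything new, since the undirected case sits at a different node of the diagram and the directed machinery (Lemmas~\ref{lem:duality}--\ref{lem:end}) does not apply to it.

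In short: your first paragraph matches the paper's treatment exactly; the rest is a research plan for an open problem, not a proof.
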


In general Conjecture~\ref{conj:Barnette} remains open. It was shown to be true for graphs with at most 66 vertices~\cite{HMMcK85}. There is an announcement~\cite{MoharWeb} claiming that the conjecture was verified for graphs with less than 86 vertices. A few years later a stronger conjecture was proposed:

\begin{conjecture}[Tutte 1971~\cite{T71}]
\label{conj:Tutte}
Every 3-connected, cubic, bipartite graph contains a Hamiltonian cycle.
\end{conjecture}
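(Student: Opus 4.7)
My plan is actually to look for a counterexample rather than a proof: Tutte's conjecture in its stated form is widely believed to be false, since Horton constructed in 1982 a 3-connected cubic bipartite graph on 96 vertices with no Hamiltonian cycle, and smaller non-Hamiltonian examples (Kelmans, Georges, \dots) have appeared since. So my first step is to describe one such graph explicitly, typically obtained by gluing together several copies of a bipartite cubic ``forcing gadget'' along 3-edge-cuts.

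Next I would verify the three easy properties for the candidate. Cubicity follows from the construction of the gadget, bipartiteness is checked by producing a proper 2-coloring of each gadget that is preserved by the 3-edge-cut identifications, and 3-connectivity reduces, via the structure of the gluing, to checking that no 2-vertex cut can occur inside or between gadgets, which is a small case analysis exploiting the symmetry of the building block.

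The hard step is proving that no Hamiltonian cycle exists. The standard device is a parity argument on 3-edge-cuts: a Hamiltonian cycle crosses each 3-edge-cut an even number of times, hence exactly $0$ or $2$ of the three cut edges, while inside each gadget the forcing property dictates that any Hamiltonian subpath entering and leaving through the cut must use an odd number of cut edges. Executing this bookkeeping carefully, over all the possible ways a Hamiltonian cycle could interact with the cut structure, is where essentially all the work lies, and this is the main obstacle.

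Finally, I would reinterpret the obstruction in the framework promised by the abstract: any Hamiltonian cycle in a cubic graph is the complement of a perfect matching whose removal leaves a single cycle, so non-Hamiltonicity should translate to a statement about the cut structure of every perfect matching of the gadget. If this translation is clean, it both recovers the Horton-type counterexample and clarifies why the gluing strategy fails for the planar restriction (Barnette's conjecture), whose status remains open precisely because all known forcing gadgets are non-planar.
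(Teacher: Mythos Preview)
The paper does not prove this statement at all: it is listed as a \emph{conjecture}, and immediately afterwards the paper records that it is false, citing Horton's 1982 counterexample and the smaller 50-vertex example of Georges and Kelmans. The paper does not reproduce any of those constructions; its own contributions concerning Conjecture~\ref{conj:Tutte} are the perfect-matching/cut reformulation (a Hamiltonian cycle is exactly the complement of a perfect matching containing no edge-cut), Lemma~\ref{lem:girth6} showing a minimum counterexample must have girth at least $6$, and the computer verification that the conjecture holds up to $40$ vertices. So there is no ``paper's proof'' to compare against; your instinct to aim for a counterexample rather than a proof is exactly right and matches what the paper reports.

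That said, the core mechanism you sketch for the non-Hamiltonicity step is not correct as written. You say a Hamiltonian cycle meets a 3-edge-cut in $0$ or $2$ edges, and then that the gadget forces an \emph{odd} number of cut edges to be used. The first half is fine (and in fact it must be exactly $2$, since the cut separates vertices), but no bipartite cubic gadget can force an odd number of cut edges on a spanning cycle: a cycle crosses every cut evenly, full stop, so the contradiction cannot arise at the level of parity of cut edges. The actual Horton/Kelmans/Georges arguments are of a different flavour: the gadget has the property that any Hamiltonian \emph{path} through it (using two of the three attachment edges) is forced to use, or forced to avoid, a \emph{specific} one of the three edges, and then the gadgets are assembled so that these individual constraints are globally inconsistent. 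This is a combinatorial forcing argument, not a parity argument, and it is precisely the piece you would need to fill in.

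Your final paragraph, reinterpreting non-Hamiltonicity as ``every perfect matching contains an edge-cut'', is spot on and is exactly the reformulation the paper uses throughout Section~2; this is the right lens, and it is indeed how the paper unifies Tutte's conjecture with the Neumann-Lara and Hochst\"attler conjectures.
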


Conjecture~\ref{conj:Tutte} was disproved by Horton~\cite{H82}. The smallest known counterexample has 50 vertices and was discovered independently by Georges~\cite{G89} and Kelmans~\cite{K88}. Moreover, in~\cite{K88} it is claimed (but no reference given) that Lomonosov and Kelmans proved Conjecture~\ref{conj:Tutte} for graphs on at most 30 vertices. We verified Conjecture~\ref{conj:Tutte} on graphs up to 40 vertices by computer.

Let us now go back to the plane and introduce a seemingly unrelated conjecture in digraphs. The \emph{digirth} of a digraph $D=(V,A)$ is the length of a shortest directed cycle. A digraph is called \emph{oriented graph} if it is of digirth at least $3$. A set of vertices $V'\subseteq V$ is acyclic in $D$ if the digraph induced by $V'$ contains no directed cycle. Neumann-Lara stated the following:

\begin{conjecture}[Neumann-Lara 1985~\cite{NL85}]
\label{conj:Neumann-Lara}
Every planar oriented graph can be vertex-partitioned into two acyclic sets.
\end{conjecture}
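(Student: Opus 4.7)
The plan is to translate the acyclic bipartition into a condition on the planar dual and then feed it into the cuts-in-matchings framework announced in the abstract, ultimately reducing Conjecture~\ref{conj:Neumann-Lara} to (a strengthening of) Barnette's Conjecture~\ref{conj:Barnette}. I do not expect a full proof to be within reach, since Barnette is open, but the reduction itself should be the main deliverable.

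First, given a planar oriented graph $D$, I would reformulate the existence of a partition $V(D)=V_1\sqcup V_2$ into two acyclic sets as the existence of an arc set $F\subseteq A(D)$ (the arcs with endpoints in different parts) that meets every directed cycle of $D$ in both "directions", i.e., $F$ contains an arc and a non-arc of every directed cycle. Because $D$ has digirth at least $3$, this global condition can be checked face by face in a plane embedding: every facial directed cycle must contain at least one arc of $F$ and at least one arc not in $F$. In the dual $D^\ast$, the dual arc set $F^\ast$ is then constrained to cross each facial boundary in a prescribed alternating pattern dictated by the orientation.

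Second, I would construct from $D^\ast$ an auxiliary $3$-connected, cubic, planar, bipartite graph $G$ by splitting each high-degree vertex according to the admissible alternation patterns and taking the bipartition forced by tails and heads of $D$. The admissible sets $F^\ast$ should correspond bijectively to perfect matchings $M$ of $G$ whose complement $G-M$ avoids a specified family of "bad" edge-cuts. This is exactly the notion of a matching with prescribed cut behavior suggested by the paper's title, and it places Neumann--Lara alongside Tait, Barnette, and Tutte within a single framework.

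Third, to actually produce such an $M$, I would extract it from a Hamiltonian cycle $H$ of $G$: $H$ decomposes into two perfect matchings whose complements are unions of even cycles, a structure that should rule out the forbidden cut patterns for at least one of them. Thus Conjecture~\ref{conj:Barnette} would imply Conjecture~\ref{conj:Neumann-Lara}. The main obstacle is of course that Barnette's conjecture itself is unresolved; the delicate technical point inside the reduction is ensuring that every acyclic bipartition of $D$ arises from a matching of $G$ (and no spurious matchings slip through), which requires careful bookkeeping at the split vertices of $D^\ast$ and handling of parallel arcs. As a concrete partial result, once the reduction is formalized one can verify the matching statement by computer on all small bipartite $3$-connected cubic planar graphs, thereby settling Conjecture~\ref{conj:Neumann-Lara} for planar oriented graphs on at most $26$ vertices, as announced in the abstract.
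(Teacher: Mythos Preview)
The paper does not prove Conjecture~\ref{conj:Neumann-Lara}; it remains open. What the paper does is reformulate it, and your proposed reformulation diverges from the paper's in a way that is not just different but, as sketched, does not go through. Via planar duality (Lemma~\ref{lem:duality}) the paper turns an acyclic $2$-partition of $D$ into an even subgraph $E$ of the dual $D^*$ with $D^*/E$ strongly connected; after a vertex-splitting to make things cubic (Lemma~\ref{lem:cubic}) and a gadget replacement (Lemma~\ref{lem:end}), this becomes Conjecture~\ref{conj:Neumann-Lara'}: every $3$-connected cubic \emph{planar digraph} has a perfect matching without a \emph{directed} cut. The orientation of $D$ survives as the orientation of the cubic graph and as the word ``directed'' in front of ``cut''; no bipartiteness appears. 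In the lattice of Figure~\ref{fig:diagram}, Neumann--Lara sits at the node ``planar $+$ directed'', while Barnette sits at ``planar $+$ bipartite''; the two are incomparable, and the paper makes no claim that one implies the other.

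Your plan instead tries to manufacture a \emph{bipartite} planar cubic graph $G$ from $D^*$ and then invoke Barnette. The step ``taking the bipartition forced by tails and heads of $D$'' is where the argument breaks: the planar dual of an oriented graph carries an orientation, not a $2$-colouring of its vertices, and splitting high-degree vertices does not produce one. (Concretely, $D^*$ is bipartite only when every vertex of $D$ has even degree, which you cannot assume.) Without bipartiteness you are back to Tait's conjecture, which is false, so a Hamiltonicity argument cannot rescue the reduction. Two smaller issues: your reformulation of ``$V_1,V_2$ acyclic'' as ``$F$ contains an arc \emph{and a non-arc} of every directed cycle'' is too strong --- hitting every directed cycle suffices; and the sentence about $H$ decomposing into two perfect matchings is not the relevant object --- in a cubic graph the useful matching is $E(G)\setminus E(H)$, whose complement is $H$ itself. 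Finally, the computational bound of $26$ vertices in Proposition~\ref{prop:NeumannLara26} is obtained not from small bipartite graphs but by checking all orientations of the (non-bipartite) triangle-free planar cubic non-Hamiltonian graphs on at most $48$ vertices and then dualizing.
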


Conjecture~\ref{conj:Neumann-Lara} remains open in general, but was recently proved for graphs with digirth at least 4~\cite{LM17}. Here we give the first computational evidence for it by showing that it is valid for all planar graphs on at most 26 vertices, see Proposition~\ref{prop:NeumannLara26}. 

Let us introduce another seemingly unrelated problem. Given a (partially) directed graph $D=(V,A)$, for $E\subseteq A$, let $D / E$ denote the graph obtained from $D$ by contracting the edges of $E$. An \emph{even} subgraph $E$ of a digraph $D=(V,A)$ is a subset $E\subseteq A$ that is an edge-disjoint union of cycles of $D$ (the cycles are not necessarily directed). 
Recently, Hochst\"attler proposed:

\begin{conjecture}[Hochst\"attler 2017~\cite{H17}]
\label{conj:Hochstattler}
In every 3-edge-connected digraph $D=(V,A)$ there exists an even subgraph $E\subseteq A$ such that $D/E$ is strongly connected.
\end{conjecture}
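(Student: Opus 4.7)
The plan is first to reformulate the statement in cycle-space terms. Observe that $D/E$ is strongly connected if and only if, for every proper non-empty $S \subsetneq V$ such that $\delta(S)$ avoids $E$, both $\delta^+(S)$ and $\delta^-(S)$ are non-empty. Equivalently, $E$ must hit every \emph{directed cut} of $D$, that is, every $\delta(S)$ whose arcs are all oriented the same way across $(S, V \setminus S)$. Since the even subgraphs of $D$ are precisely the elements of the cycle space $\mathcal{C}(D)$ of the underlying graph over $\mathbb{F}_2$, the task reduces to finding $E \in \mathcal{C}(D)$ that meets every directed cut of $D$. Three-edge-connectivity forces each such cut to contain at least three arcs.

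With this reformulation in hand, I would first try an iterative construction: start with $E = \emptyset$ and, while some directed cut $\delta(S)$ is uncovered, pick an undirected cycle $C$ of $D$ meeting $\delta(S)$ in an odd number of arcs (which exists by cycle-cut orthogonality over $\mathbb{F}_2$ together with $|\delta(S)| \geq 3$), then replace $E$ by $E \triangle C$. Each step preserves membership in the cycle space and covers $\delta(S)$. A more structural alternative would be to apply Edmonds' branching theorem, after a suitable biorientation, to produce edge-disjoint spanning in- and out-arborescences rooted at a common vertex; their symmetric difference is automatically even, and a carefully chosen pair should meet every directed cut spanned by the arborescence edges.

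The main obstacle is that adding $C$ to cover one cut $\delta(S)$ can uncover another cut $\delta(S')$, so the iteration has no obvious monotone progress measure. The deepest difficulty arises from directed cuts of size exactly $3$: any even subgraph meets any cut in an even number of arcs, so the only way to cover a $3$-cut is to hit it in exactly two of its arcs, a rigid parity constraint that several interacting $3$-cuts might force to be jointly unsatisfiable. If the direct construction stalls, I would pivot to induction on $|V(D)|$: find a directed cycle $C$ whose contraction preserves $3$-edge-connectivity, apply induction to $D/C$ to obtain $E' \in \mathcal{C}(D/C)$ covering all directed cuts there, and set $E := E' \cup C$; both parts are even, and together they meet every directed cut of $D$. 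Guaranteeing the existence of such a contractible directed cycle is precisely the step where I expect the argument to fail — and, given the abstract, this is presumably where the authors locate their counterexample.
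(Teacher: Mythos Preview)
The statement you are trying to prove is a conjecture that the paper \emph{disproves}; there is no proof in the paper to compare against, only a counterexample construction. Any argument aiming to establish the statement in full generality is therefore doomed from the outset, and indeed your proposal contains a concrete error already in the iterative step: by cycle--cut orthogonality over $\mathbb{F}_2$, every undirected cycle meets every edge-cut in an \emph{even} number of edges, so the cycle $C$ you ask for, meeting $\delta(S)$ in an odd number of arcs, never exists. What you presumably intended is a cycle meeting $\delta(S)$ in a nonzero (hence $\geq 2$) number of arcs, which does exist; but then, as you correctly note, the symmetric-difference update has no monotone progress measure, and the rigidity of the parity constraint at $3$-cuts (each must be hit in exactly two of its three arcs) is precisely what makes a global solution impossible in some instances.

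The paper exploits exactly this rigidity, but constructively rather than obstructively. It first shows (Lemmas~\ref{lem:duality}--\ref{lem:end}) that the conjecture is equivalent to the statement that every $3$-connected cubic digraph has a perfect matching containing no directed cut. It then exhibits a partial orientation of the Petersen graph with an \emph{$a$-arc}: an arc such that every perfect matching containing it also contains a directed cut. Two copies of this gadget, or three copies glued into a cube, are assembled (Lemmas~\ref{lem:allPMhaveACut'} and~\ref{lem:allPMhaveACut}) into a $3$-connected cubic digraph in which \emph{every} perfect matching contains a directed cut; applying the vertex-blow-up of Lemma~\ref{lem:end} then yields a $122$-vertex counterexample to the original formulation. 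Your closing intuition --- that interacting $3$-cuts can force the parity constraints to be jointly unsatisfiable, and that the inductive contraction step is where the argument breaks --- is exactly right; what the paper supplies is the explicit gadget realising this obstruction.
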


We construct a counterexample to Conjecture~\ref{conj:Hochstattler} on $122$ vertices, see Proposition~\ref{prop:Hochstattlercounter}. 

\bigskip

In Section~\ref{sec:alltogether} we explain that all these conjectures arise in the same context and naturally lead to a new question (Conjecture~\ref{conj:KV}). In Section~\ref{sec:construction} we give a general method that might be helpful to search for counterexamples to Conjectures~\ref{conj:Neumann-Lara} and~\ref{conj:KV}. In particular, this method leads to a counterexample to Conjecture~\ref{conj:Hochstattler}. We explain our computational results in Section~\ref{sec:computation} and conclude the paper in Section~\ref{sec:concl}.

\section{Cubic graphs without perfect matchings containing a cut}\label{sec:alltogether}
In the present section we will reformulate all of the above conjectures as statements about cubic (planar, bipartite, directed) graphs with some perfect matchings not containing a (directed) cut. See Figure~\ref{fig:diagram} for an illustration of the relations between the conjectures discussed in this section.

\begin{figure}[!ht]
\hypersetup{linkcolor=blue}
\centering
\begin{tikzpicture}[
block/.style={
draw,
circle, 
minimum width={width("Hochstattl")},
align=center,
font=\scriptsize}, join=bevel,inner sep=0.5mm]
\node[draw,fill=blue, shape=circle,draw=black,minimum size=1pt,inner sep=2.5pt](000) at (0,0) {};
\node[block, color=blue](001) at (3,2.5) {Neumann\\-Lara};
\node[block, color=blue](010) at (0,2.5) {Conjecture~\textcolor{blue}{\ref{conj:KV}}};
\node[block, color=blue](100) at (-3,2.5) {Barnette};

\node[block, color=red!80!black](110) at (-3,5) {Tutte};
\node[block, color=red!80!black](101) at (0,5) {Tait};
\node[block, color=red!80!black](011) at (3,5) {Hochst\"attler};
\node[draw,shape=circle,draw=black,minimum size=1pt,inner sep=2.5pt, fill=red!80!black](111) at (0,7.5) {};

\draw[-]  (000)--(001) (001)--(011) (011)--(010)--(000) (100)--(101)--(111);
\draw[-]  (100)--(110)--(111)--(101)--(100) (001)--(000);
\foreach \I in {00,01,10,11}
\draw[-] (1\I)--(0\I);

\node at (-1.6,6.5) {\textbf{b}};
\node at (0.2,6.5) {\textbf{p}};
\node at (1.6,6.5) {\textbf{d}};
\end{tikzpicture}
\hypersetup{linkcolor=red}
\caption{Conjectures of the form "\textit{Every 3-connected, cubic, \textbf{b}ipartite, \textbf{p}lanar, \textbf{d}irected graph contains a perfect matching without (directed) cut}". The upper four are false, the lower four are open.}
\label{fig:diagram}
\end{figure}
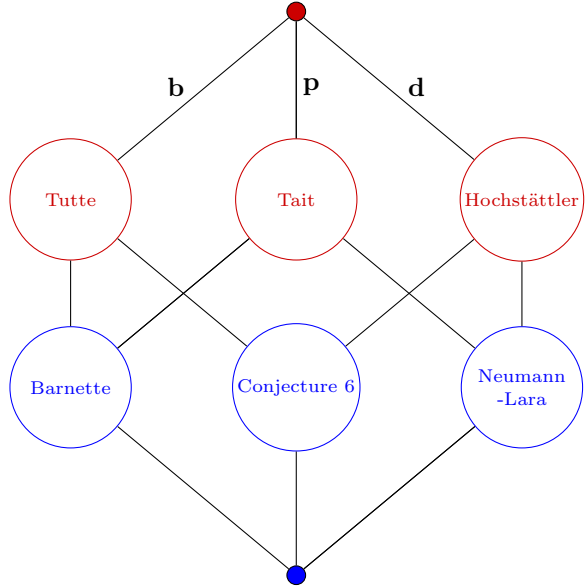

First, observe that a cubic graph $G=(V,E)$ contains a Hamiltonian cycle ${C}$, if and only if the complement $E\setminus {C}$ is a perfect matching containing no edge-cut. Therefore, in Conjectures~\ref{conj:Tait}, \ref{conj:Barnette}, \ref{conj:Tutte} the property \emph{contains a Hamiltonian cycle} can be replaced equivalently by \emph{contains a perfect matching without cut}.
This section is dedicated to the proof that Conjectures~\ref{conj:Neumann-Lara} and~\ref{conj:Hochstattler} can be  reformulated equivalently as below, which justifies their placement in Figure~\ref{fig:diagram}.

\begin{con}[Neumann-Lara]
\label{conj:Neumann-Lara'}
 Every 3-connected, cubic, planar digraph  contains a perfect matching without a directed cut.
\end{con}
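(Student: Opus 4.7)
The plan is to attempt a direct attack on this matching reformulation. By Petersen's theorem, every 3-connected cubic graph admits a perfect matching, so the task is to choose one containing no directed cut. For a perfect matching $M$ of $D$, define the potential $\phi(M)$ to be the number of vertex sets $S$ with $\partial^+(S) \subseteq M$; the goal is to produce an $M$ with $\phi(M) = 0$.

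First I would try to establish a local exchange lemma: whenever $\phi(M) > 0$, there exists an $M$-alternating cycle $C$ such that $\phi(M \triangle C) < \phi(M)$, which iterated would yield the desired matching. To locate such a $C$ when $\partial^+(S) \subseteq M$, note that because all $|\partial^+(S)|$ edges across $S$ belong to $M$, the restriction $M \cap E(G[S])$ is a near-perfect structure in $G[S]$ with exactly $|\partial^+(S)|$ vertices left unmatched inside; by 3-edge-connectivity and cubicity, $G[S]$ contains an $M$-alternating walk connecting two boundary vertices of opposite orientation, which can be closed into an alternating cycle $C$ meeting $\partial^+(S)$. I would then use planar duality as the main tool: a directed cut of $D$ is exactly a directed cycle of the planar dual $D^*$, and 3-edge-connectivity of $D$ forces $D^*$ to have girth at least $3$; a discharging argument on the faces of $D^*$, weighted by how the current matching interacts with each short directed dual cycle, should constrain the orientations enough to guarantee that $M \triangle C$ strictly decreases $\phi$.

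The main obstacle is the nonlocal interaction between exchanges and directed cuts: swapping along $C$ can eliminate one directed cut from $M$ while simultaneously turning some previously ``mixed'' undirected cut of $D$ into a directed cut lying entirely in $M \triangle C$. Controlling these side effects seems to require either a substantially more refined potential than $\phi$, or an inductive argument on $|V(D)|$ built around a reducible configuration — for instance, a short face of $D$ whose contraction preserves planarity, 3-edge-connectivity, the cubic structure (up to local surgery on the boundary), and, crucially, the property of having a perfect matching free of directed cuts. Identifying such a reducible configuration, or designing a monovariant that provably decreases under a restricted class of alternating exchanges, is in my view the genuine difficulty, and is precisely what has kept both this reformulation and the original Neumann-Lara conjecture open since 1985.
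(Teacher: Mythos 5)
You have not proved this statement, and neither does the paper: Conjecture~IV is \emph{open}. The paper's entire contribution regarding it is the equivalence machinery (Lemmas~\ref{lem:duality}, \ref{lem:cubic}, and~\ref{lem:end}, showing it is equivalent to Neumann-Lara's original Conjecture~\ref{conj:Neumann-Lara}) together with computational verification for cubic planar 3-connected digraphs on at most 48 vertices, which via planar duality yields Proposition~\ref{prop:NeumannLara26}. A correct proof of your statement would resolve Neumann-Lara's 1985 conjecture, of which only the digirth-$\geq 4$ case is known (Li--Mohar). So the honest admission in your final paragraph is accurate: what you have is a proof strategy with the decisive step missing, not a proof.

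Beyond the admitted gap (the exchange lemma asserting $\phi(M \triangle C) < \phi(M)$ is never established, and you yourself observe that an exchange can create a new directed cut lying entirely in $M \triangle C$, so $\phi$ is not a valid monovariant), there is a concrete error in the one mechanism you do sketch. If $\partial^+(S) \subseteq M$ is a \emph{directed} cut, then by definition every edge of the cut is oriented from $S$ to $V \setminus S$; there are no ``boundary vertices of opposite orientation'' in $S$, so the alternating walk you propose to close into a cycle $C$ meeting $\partial^+(S)$ does not exist as described. (For an undirected cut inside $M$ one can indeed find alternating cycles crossing it, but for directed cuts the orientation hypothesis you lean on is vacuously violated.) The discharging step is likewise only a hope: no charge assignment or unavoidable-configuration analysis is given, and the paper's Section~\ref{sec:construction} suggests why a purely local argument is delicate --- the existence of an $a$-arc in a planar, 3-connected, cubic digraph would already kill the conjecture via the $\widetilde{D}$ construction, and ruling such configurations out is exactly the kind of global constraint your local exchanges do not control. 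In short: the reformulation you start from is the paper's (so your framing is correct), but the proof itself has both an unproven central lemma and a step that fails on its stated hypotheses.
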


\begin{con}[Hochst\"attler]
\label{conj:Hochstattler'}
 Every 3-edge-connected, cubic digraph contains a perfect matching without a directed cut.
\end{con}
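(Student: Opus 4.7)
The plan is to start from a perfect matching $M$ provided by Petersen's theorem (since a 3-edge-connected cubic graph is bridgeless) and to progressively modify it until it avoids every directed cut. Recall that a directed cut is a set of arcs $\delta^+(S)=\{(u,v)\in A\mid u\in S,\,v\notin S\}$ for some $\emptyset\neq S\subsetneq V$ with $\delta^-(S)=\emptyset$; the matching $M$ contains this cut precisely when $\delta^+(S)\subseteq M$. Because the graph is cubic and 3-edge-connected, $|\delta(S)|\geq 3$ and $|\delta(S)|\equiv |S|\pmod 2$, and $|M\cap\delta(S)|\equiv |S|\pmod 2$ as well, so a directed cut contained in $M$ has odd size at least $3$. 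The narrow directed cuts of size exactly $3$ are the rigid obstruction to attack first, since they force all three arcs into $M$ and match three tails in $S$ with three heads in $V\setminus S$.

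Next I would try a local exchange argument. If a perfect matching $M$ contains a directed cut $\delta^+(S)$, pick an $M$-alternating cycle $C$ crossing $\delta^+(S)$; such cycles are plentiful in 3-edge-connected cubic graphs, where the number of perfect matchings is known to grow exponentially in the number of vertices. The symmetric difference $M\triangle C$ is another perfect matching that no longer contains $\delta^+(S)$. I would then search for a monotone invariant — for instance the number of directed cuts contained in the current matching, perhaps weighted by cut size or by $|S|$ — and try to show that some such exchange strictly decreases it, so that the process terminates at a matching free of any directed cut. For wider directed cuts ($|\delta^+(S)|\geq 5$) the exchange has more slack and the argument should be easier; the delicate case is again the narrow cuts, where removing one arc from $M$ is forced to create a second matching edge at both endpoints of the cut.

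The hard part, and the step where I expect the argument to break, is controlling the global interaction between different directed cuts: an exchange that destroys one narrow directed cut can plausibly introduce another elsewhere, and there is no obvious reason for a monotone invariant to exist. If no such invariant can be found, this points toward the conjecture failing, with counterexamples arising from configurations in which many narrow directed cuts are locked together so that every perfect matching is caught by at least one of them — exactly the type of construction that Section~\ref{sec:construction} pursues. My plan is therefore really a fork: if the exchange argument can be made monotone, the conjecture is true; otherwise the local obstruction it reveals should be the seed of a counterexample.
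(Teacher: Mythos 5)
Your proposal cannot be completed, because the statement you were asked to prove is false, and the paper's treatment of it is a refutation, not a proof. Section~\ref{sec:construction} constructs a $3$-connected, cubic, partially directed graph on $24$ vertices (Figure~\ref{subfig:smallestcount}) in which \emph{every} perfect matching contains a directed cut; any completion of the partial orientation keeps the relevant cuts directed, so this settles the conjecture in the negative (and computer search shows $24$ vertices is optimal). This kills your plan at its core: the alternating-cycle exchange $M\triangle C$ does destroy the particular directed cut $\delta^+(S)\subseteq M$ it crosses, but no monotone invariant can exist, since for the counterexample graph the process has no terminal state at all --- every perfect matching is ``caught''. The mechanism is exactly the one you name at the end as the danger case: the paper starts from a partial orientation $P$ of the Petersen graph with an \emph{$a$-arc} $(u,v)$, i.e.\ an arc such that every perfect matching through it contains a directed cut (Observation~\ref{obs:petersenDirectedCuts}); splitting $u$ into three vertices gives a gadget $D'$, and gluing a copy of $D'$ to an oppositely oriented copy as in Figure~\ref{subfig:a-arcs-constructions2} locks the cuts together. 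A parity count shows any perfect matching of the glued graph $\hat{D}$ uses either all three or exactly one of the suspension arcs; all three yields a directed cut outright, and each of the single-arc cases forces either a directed cut inside a copy (via the $a$-arc property) or one of two explicit directed cuts of the gluing (Lemma~\ref{lem:allPMhaveACut'}).

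To your credit, your closing ``fork'' paragraph identifies the correct branch: you predict that if no monotone invariant exists, the obstruction should be ``many narrow directed cuts locked together so that every perfect matching is caught by at least one of them'', which is a fair description of the $a$-arc construction. But as a proof proposal the submission has an irreparable gap rather than a fixable one --- the first branch of the fork is vacuous. Two smaller technical points: your appeal to the exponential number of perfect matchings guarantees many matchings, not the existence of an $M$-alternating cycle crossing a \emph{prescribed} cut with controlled side effects elsewhere; and your parity observations ($|\delta(S)|$ odd, at least $3$, when $\delta^+(S)\subseteq M$) are correct but are exactly the constraints the counterexample exploits, since its gadgets synchronize the size-$3$ crossings so that avoiding one forced cut activates another.
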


Before proceeding to the proof, note that a new question follows naturally:

\begin{conjecture}
\label{conj:KV}
 Every 3-connected, cubic, bipartite digraph contains a perfect matching without directed cut.
\end{conjecture}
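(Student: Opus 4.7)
The plan is to work with the equivalent formulation developed in Section~\ref{sec:alltogether}: in a $3$-connected, cubic, bipartite digraph $G$, we seek a $2$-factor $F$ such that the quotient $G/F$, obtained by contracting each cycle of $F$ to a single vertex, is strongly connected. Since $G$ is cubic and bipartite, every $2$-factor decomposes into edge-disjoint \emph{even} cycles, and since $G$ is $3$-connected, between any two parts of a bipartition of the components of $F$ there are at least $3$ arcs, all of which lie in the perfect matching $M = E(G)\setminus F$. A directed cut of $M$ then corresponds to a bipartition of the components of $F$ whose crossing $M$-arcs all point the same way.

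A first attempt would be to minimize the number of components of $F$: a $2$-factor with few, large components is likelier to produce a strongly connected quotient, since each pairwise crossing uses at least three arcs and having both orientations among them becomes generic. By K\"onig's edge-colouring theorem, $E(G)$ decomposes into three perfect matchings, whose complements give three natural $2$-factors to try; if none works, I would modify $F$ along $M$-alternating cycles so as to merge components while preserving evenness. Complementarily, one could try a counting approach: for each bipartition $(A, V\setminus A)$ with $|A|,|V\setminus A|\geq 3$, the perfect matchings whose restriction to the edge boundary of $A$ forms a directed cut form an explicit and restricted family, and a union bound over the minimal such bipartitions might beat the known exponential lower bounds on the number of perfect matchings in $3$-regular bipartite graphs.

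A second, structural line of attack would be to search for a reducible configuration in the spirit of Barnette-type reductions: a small substructure whose contraction or expansion yields a strictly smaller $3$-connected, cubic, bipartite digraph, together with a rule lifting a good perfect matching back to $G$. In parallel, I would test the statement on the known counterexamples to Tutte's Conjecture~\ref{conj:Tutte}, such as the Horton and Georges--Kelmans graphs: for every orientation of such a graph, exhibit (by case analysis or computer search) a perfect matching without directed cut. The mechanism by which bipartiteness suffices to produce a strongly connected quotient, though not necessarily a Hamilton cycle, could then be isolated on concrete examples and suggest the correct general argument.

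The main obstacle, in my view, is precisely that Conjecture~\ref{conj:Hochstattler} -- the same statement with the bipartite hypothesis dropped -- is false, as the authors construct a counterexample later in the paper. Any proof of Conjecture~\ref{conj:KV} therefore has to use bipartiteness in a genuinely essential way rather than as a convenient extra assumption, and must in particular fail on the Hochst\"attler counterexample. Identifying the structural feature of cubic bipartite digraphs that rules out that counterexample, and converting it into a general existence argument for a suitable $2$-factor, is where I expect the real difficulty to lie.
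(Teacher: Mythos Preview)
The statement you are addressing is not a theorem in the paper; it is an \emph{open conjecture} that the authors introduce as a natural new question arising from their framework. There is no proof in the paper to compare your proposal against: the authors merely observe that Conjecture~\ref{conj:KV} is weaker than Tutte's Conjecture~\ref{conj:Tutte} (hence holds for graphs on at most 40 vertices by their computations), and report a computer check that the known small counterexamples to Conjecture~\ref{conj:Tutte} are not counterexamples to Conjecture~\ref{conj:KV}.

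Your write-up is accordingly not a proof either, and you seem aware of this: it is a research plan listing several possible lines of attack (minimizing $2$-factor components, union bounds over cuts, reducible configurations, testing on Horton/Georges--Kelmans graphs) together with a candid admission that the essential difficulty---isolating how bipartiteness blocks the mechanism behind the Hochst\"attler counterexample---remains unresolved. None of the sketched approaches is carried to completion, and each has visible gaps: the union-bound idea would need the number of perfect matchings to dominate the number of minimal cuts, which is far from clear; the ``merge components along alternating cycles'' idea does not come with a termination or progress argument; and the reducible-configuration route is stated without any candidate configuration. So as a proof the proposal is genuinely incomplete, but as a reading of the problem it is reasonable and in fact overlaps with what the paper does computationally (your suggestion to test orientations of the Tutte-conjecture counterexamples is exactly what the authors report doing).
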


Let us now proceed to the proof, which we have split into several lemmas.
\begin{lemma}\label{lem:duality}
 Conjecture~\ref{conj:Neumann-Lara} is equivalent to Conjecture~\ref{conj:Hochstattler} restricted to planar graphs.
\end{lemma}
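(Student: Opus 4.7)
The plan is to prove both implications at once via planar duality. Embedding the digraph $D$ in the plane, form its dual $D^*$ with the standard convention that each arc $e = uv$ of $D$ dualizes to an arc $e^*$ going from the face on the right of $e$ to the face on its left. I first want to observe that this operation sets up a bijection between 3-edge-connected planar digraphs and (simple) planar oriented graphs: if $D$ is 3-edge-connected then no two faces share two edges, so $D^*$ has no multi-edges, hence no directed 2-cycle, so $D^*$ has digirth at least $3$; conversely, if $H$ is a simple planar oriented graph then $H^*$ has no 2-edge-cut and is therefore 3-edge-connected. Loops, parallel arcs, and 1-cuts in the inputs are irrelevant for either conjecture and can be removed without loss of generality.

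Next, I would translate each ingredient of Hochst\"attler's statement for $D$ into the corresponding ingredient for $D^*$ using three classical facts of planar duality. (i) The cycle space of $D$ equals the cut space of $D^*$, so $E \subseteq A(D)$ is an even subgraph if and only if $E^* = \{e^* : e \in E\}$ is the edge boundary $\partial(V_1)$ of some bipartition $V(D^*) = V_1 \sqcup V_2$. (ii) Contraction in the primal is deletion in the dual: $(D/E)^* = D^* - E^*$. (iii) A planar digraph is strongly connected precisely when its dual is acyclic, because directed cuts dualize to directed cycles.

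Combining these facts, an even subgraph $E$ of $D$ with $D/E$ strongly connected corresponds to a bipartition $V(D^*) = V_1 \sqcup V_2$ for which $D^* - E^* = D^*[V_1] \cup D^*[V_2]$ is acyclic, i.e., for which both $V_1$ and $V_2$ are acyclic sets of $D^*$. This is exactly the Neumann-Lara partition property for $D^*$, so Hochst\"attler's conjecture on $D$ is equivalent to Neumann-Lara's conjecture on $D^*$; running through the bijection above proves the lemma. The point I expect to require most care is the class correspondence — confirming that "$D$ is 3-edge-connected" and "$D^*$ is an oriented graph" really match, and that minor ambiguities (loops, parallel arcs, dependence on the chosen planar embedding) do not affect the argument; once this bookkeeping is settled, the remainder is a routine application of primal-dual dictionary.
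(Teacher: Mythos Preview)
Your proof is correct and follows essentially the same approach as the paper: both arguments invoke the standard planar-duality dictionary (simple $\leftrightarrow$ 3-edge-connected, cuts $\leftrightarrow$ even subgraphs, deletion $\leftrightarrow$ contraction, acyclic $\leftrightarrow$ strongly connected) to translate Neumann-Lara's partition property on $D^*$ into Hochst\"attler's even-subgraph property on $D$. The only cosmetic differences are that you use the opposite orientation convention for dual arcs and run the translation from the Hochst\"attler side rather than the Neumann-Lara side, neither of which affects the argument.
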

\begin{proof}
Partitioning the vertex set of an oriented graph $D$ in two acyclic sets $X,Y$ corresponds to finding an edge-cut $C$ (consisting of the edges between $X$ and $Y$) such that $D\setminus C$ is acyclic. If $D$ is planar, we can look at the dual digraph $D^*$, where an edge of $D^*$ (corresponding to an arc $a$ of $D$) is oriented from the face on the left of $a$ to the face on the right of $a$. It is well-known that under this duality, we have the following:
\begin{itemize}
 \item $D$ is strongly connected $\iff$ $D^*$ is acyclic,
 \item $D$ is simple  $\iff$ $D^*$ is $3$-edge-connected,
 \item $C$ is a cut in $D$  $\iff$ the corresponding arcs $E$ in $D^*$ form an even subgraph,
 \item for any set $A'$ of arcs in $D$ and corresponding arcs $B'$ in $D^*$, we have $(D\setminus A')^*=D^*/B'$.
\end{itemize}
Since $D$ has digirth at least $3$ it can be assumed to be simple. So with the above properties we get that in a planar oriented graph $D$ there is a cut $C$ such that $D\setminus C$ is acyclic, if and only if in the $3$-edge-connected digraph $D^*$ there is an even subgraph $E$ such that $D^*/E$ is strongly connected. Thus, Conjecture~\ref{conj:Neumann-Lara} can be reformulated as: \emph{every 3-edge-connected, planar digraph $D=(V,A)$ contains an even subgraph $E\subseteq A$ such that $D/E$ is strongly connected}.
\end{proof}

Define an \emph{odd subgraph} $O\subseteq A$ of a digraph $D=(V,A)$ such that every vertex has odd degree with respect to $O$.

\begin{lemma}\label{lem:cubic}
Conjecture~\ref{conj:Hochstattler} (restricted to planar graphs) is equivalent to:
Every (planar,) 3-connected, cubic digraph $D=(V,A)$ contains an odd subgraph without directed cut.
\end{lemma}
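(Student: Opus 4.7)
The plan is to derive the equivalence from two observations, which I will call \emph{cubic complementation} and a \emph{vertex blow-up} reduction. The first supplies the conceptual content that makes the reformulation possible; the second is a routine reduction to the cubic case.

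For cubic complementation, in a cubic digraph $D=(V,A)$ the complement $A\setminus O$ of an odd subgraph is even (at each vertex the $O$-degree is $1$ or $3$, hence the complementary degree is $2$ or $0$), and conversely the complement of an even subgraph is odd. Furthermore, for any digraph and any $F\subseteq A$, the contraction $D/F$ is strongly connected if and only if no directed cut of $D$ is contained in $A\setminus F$: directed cuts of $D/F$ are in natural bijection with directed cuts of $D$ that avoid $F$. Combining these two facts, in a cubic digraph an even subgraph $E$ with $D/E$ strongly connected is literally the same object as an odd subgraph $O=A\setminus E$ without directed cut. Together with the standard fact that a cubic simple graph is $3$-connected iff it is $3$-edge-connected, this proves the lemma on the cubic side.

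To cover the general case, I would reduce Conjecture~\ref{conj:Hochstattler} to its cubic subcase by a vertex blow-up. Given a $3$-edge-connected digraph $D$ (planar in the planar version), replace each vertex $v$ of degree $d\ge 4$ by a directed cycle $v_1\to v_2\to\cdots\to v_d\to v_1$, and re-attach the edges originally incident to $v$, one to each $v_i$, preserving their orientations. Call the resulting cubic digraph $D'$ and let $C\subseteq A(D')$ be the set of inserted cycle edges, so that $D=D'/C$; the construction is clearly planarity-preserving. Assuming the cubic statement, pick an even subgraph $E'\subseteq A(D')$ with $D'/E'$ strongly connected and set $E:=E'\setminus C$. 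A parity count on each gadget --- the sum of $E'$-degrees over the $v_i$ is even, and the cycle edges contribute an even amount to that sum --- shows that the number of external edges of the gadget in $E'$ is even, so $E$ is even in $D$. Moreover $D/E=D'/(E\cup C)=D'/(E'\cup C)$ is a contraction of the strongly connected digraph $D'/E'$, hence itself strongly connected. The reverse implication is trivial, since the cubic case is just a restriction of the general conjecture.

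The step I expect to require the most care is verifying that the blow-up $D'$ is $3$-edge-connected: a potential edge-cut of size at most $2$ in $D'$ either uses only external edges, in which case it descends to an edge-cut of the same size in $D$ and contradicts $3$-edge-connectivity, or it contains at least two cycle edges of a single gadget cycle, in which case one must use the external edges of that gadget to show that the cut must in fact be larger. This is a short case analysis but is the only non-trivial verification in the reduction. With it in hand, the two ingredients fit together to give the required equivalence, in both the planar and the general setting.
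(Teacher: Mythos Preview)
Your proposal is correct and follows essentially the same two-step argument as the paper: complement even/odd subgraphs in the cubic case, then reduce the general case to the cubic one by blowing up high-degree vertices into cycles. The only cosmetic difference is that you orient the gadget cycles while the paper leaves them undirected (obtaining a partially directed graph); your choice is arguably cleaner since it keeps $D'$ a genuine digraph to which the cubic hypothesis applies directly.

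One caveat on the step you flagged: your dichotomy for a putative $2$-edge-cut of $D'$ is right, but the second case does \emph{not} go through by a case analysis alone---whether two cycle edges of a single gadget form a cut of $D'$ depends on the cyclic order in which you attached the external edges. If $v$ is a cut vertex of $D$ and you attach all edges towards one component of $D\setminus v$ consecutively, two gadget edges really do give a $2$-edge-cut. The paper handles this by simply stipulating that the blow-up be performed ``such that $D'$ is cubic and $3$-connected''; you should likewise treat the attachment order as a choice (in the planar case the cyclic order from the embedding works), rather than something to be verified after the fact.
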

\begin{proof}
First, we show that both conjectures are equivalent to their restriction to cubic graphs. One direction is trivial i.e. if Conjecture~\ref{conj:Hochstattler} is true for general (planar) graphs, then in particular it holds for cubic (planar) graphs. For the other direction suppose Conjecture~\ref{conj:Hochstattler} for (planar) cubic graphs and let $D$ be a 3-edge-connected directed graph $D=(V,A)$. Replace every vertex $v$ of $D$ with an undirected cycle $C_v$ of length $\mathrm{deg}(v)$ such that the resulting partially directed graph $D'=(V',A')$ is cubic and 3-connected. Let $E'\subseteq A'$ be an even subgraph such that $D'/E'$ is strongly connected. Since $D$ is a contraction of $D'$ and strong connectivity is closed under contraction, then $D/(E'\cap A)$ is strongly connected. Since $D'$ is cubic, then $E'$ is a disjoint union of cycles $C_1,\ldots, C_k$. Thus, for any of the cycles of the form $C_v$ in $D'$, either $C_v\in\{C_1,\ldots, C_k\}$ or $C_v$ intersects $E'$ in a set of paths. Therefore, after contracting $C_v$ back, vertex $v$ has even degree with respect to $E'\cap A$. Thus, $E'\cap A$ is an even subgraph and we are done. Note that this construction preserves planarity and 3-connectivity.

Finally, observe that a cubic digraph $D=(V,A)$ contains an even subgraph such that $D/E$ is strongly connected if and only if the complement $O=A\setminus E$ is an odd subgraph containing no directed edge-cut.  
\end{proof}


Given a cubic digraph $D=(V,A)$ define the cubic digraph $D^+=(V',A')$ by replacing each vertex $v$ of $D$ that is neither a sink nor a source by $7$ vertices as shown in Figure~\ref{fig:replacement}. The preserved arcs from $D$ (depicted in bold) are called \emph{suspension arcs}.

\begin{figure}[!ht]
\centering
\begin{tikzpicture}[join=bevel,inner sep=0.5mm]

\node[graphnode](vertex) at (-7,1) {$v$};
\node[graphnode](u) at (-7,-1) {$u$};
\node[graphnode](w) at (-5,2.5) {$w$};
\node[graphnode](x) at (-9,2.5) {$x$};

\draw[->,line width=2pt] (vertex)--(u);
\draw[<-,line width=2pt] (vertex)--(w);
\draw[<-,line width=2pt] (vertex)--(x);

\node[graphnode](a) at (0,0) {$v_1$};
\node[graphnode](b) at (1,1) {$v_2$};
\node[graphnode](c) at (1,2) {$v_3$};
\node[graphnode](d) at (0,3) {$v_4$};
\node[graphnode](e) at (-1,2) {$v_5$};
\node[graphnode](f) at (-1,1) {$v_6$};
\node[graphnode](g) at (0,1.75) {$v_7$};

\node[graphnode](u1) at (0,-1) {$u$};
\node[graphnode](w1) at (2,2.5) {$w$};
\node[graphnode](x1) at (-2,2.5) {$x$};

\draw[->,line width=2pt] (a)--(u1);
\draw[<-,line width=2pt] (c)--(w1);
\draw[<-,line width=2pt] (e)--(x1);
\draw[postaction=decorate, decoration={markings, mark=at position 0.6 with{\arrow[line width=1pt]{stealth}}}] (g)--(f);
\draw[postaction=decorate, decoration={markings, mark=at position 0.6 with{\arrow[line width=1pt]{stealth}}}] (g)--(b);
\draw[postaction=decorate, decoration={markings, mark=at position 0.6 with{\arrow[line width=1pt]{stealth}}}] (g)--(d);
\draw[postaction=decorate, decoration={markings, mark=at position 0.6 with{\arrow[line width=1pt]{stealth}}}] (d)--(e);
\draw[postaction=decorate, decoration={markings, mark=at position 0.6 with{\arrow[line width=1pt]{stealth}}}] (f)--(e);

\draw[postaction=decorate, decoration={markings, mark=at position 0.6 with{\arrow[line width=1pt]{stealth}}}] (a)--(f);
\draw[postaction=decorate, decoration={markings, mark=at position 0.6 with{\arrow[line width=1pt]{stealth}}}] (a)--(b);
\draw[postaction=decorate, decoration={markings, mark=at position 0.6 with{\arrow[line width=1pt]{stealth}}}] (b)--(c);
\draw[postaction=decorate, decoration={markings, mark=at position 0.6 with{\arrow[line width=1pt]{stealth}}}] (d)--(c);
\end{tikzpicture}
\caption{The replacement of a vertex $v$ of $D$ by $R_v$ to obtain $D^+$. The bold arcs are the arcs which are preserved (suspension arcs).}
\label{fig:replacement}
\end{figure}
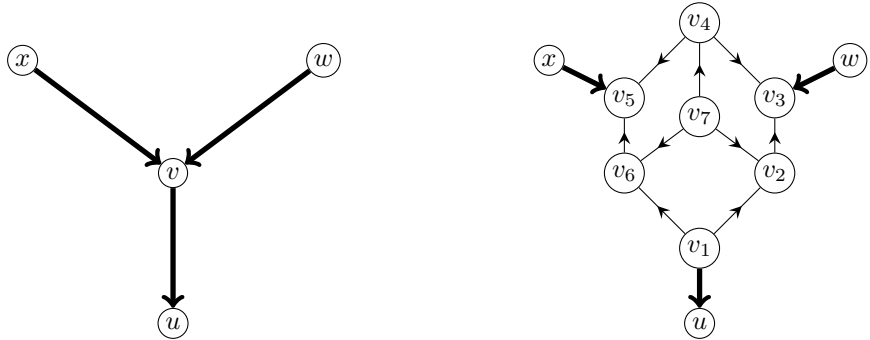

\begin{lemma}\label{lem:end}
Let $D=(V,A)$ be a cubic digraph such that every perfect matching contains a directed cut. Every odd subgraph of $D^+=(V',A')$ contains a directed cut. Furthermore, $|V'|=|V|+6|\{v\in V\mid \delta^+(v)\in\{1,2\}\}|$ and if $D$ is 3-connected, bipartite or planar, then so is $D^+$.
\end{lemma}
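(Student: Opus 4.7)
The plan is to separate the routine structural verifications from the one non-trivial ingredient, a parity argument inside the gadget. The cardinality identity is immediate: each of the $|\{v \in V \mid \delta^+(v) \in \{1,2\}\}|$ replaced vertices contributes $+6$ to $|V'|$, and sinks and sources contribute $0$. For planarity, bipartiteness and $3$-connectivity, I would carry out a direct gadget verification. The undirected skeleton of $R_v$ is planar and triangle-free; it admits the proper $2$-colouring $\{v_1, v_3, v_5, v_7\} \mid \{v_2, v_4, v_6\}$ that places the three ports in the same class as $v$, so the bipartition of $D$ extends consistently to $D^+$; and it has a reflective symmetry $v_3 \leftrightarrow v_5$, $v_2 \leftrightarrow v_6$ so either cyclic order of the three ports around $R_v$ is realisable in a planar embedding. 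For $3$-edge-connectivity of $D^+$ one checks, by a short case analysis on $S \cap R_v$, that any $\le 2$-edge-cut of $D^+$ splitting some $R_v$ would descend to a cut of size $\le 2$ in $D$, contradicting the $3$-edge-connectivity of $D$.

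For the main claim let $O \subseteq A'$ be an odd subgraph of $D^+$ and set $M := O \cap (\text{suspension arcs})$, viewed as a subset of $A$. The key step is: if $O$ contains no directed cut of $D^+$, then $M$ is a perfect matching of $D$. Summing the $O$-degrees of $v_1, \ldots, v_7$ modulo $2$ (each internal arc contributes $0$, each suspension arc contributes $1$) shows that the number of suspension arcs at $R_v$ lying in $O$ is odd, hence $1$ or $3$. Now suppose the ``$3$''-case occurs at $R_v$; inspecting the gadget, $v_1$ and $v_7$ are local sources and $v_3, v_5$ local sinks, so in $D^+$ each of these four vertices has all three incident arcs pointing the same way. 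If any of $\{v_1\}, \{v_3\}, \{v_5\}, \{v_7\}$ has all three of its $D^+$-arcs in $O$, that singleton is a directed cut of $D^+$ contained in $O$, contradicting the assumption. Otherwise, the parity condition at each of $v_1, v_3, v_5$ combined with the presence of its suspension arc forces both of its internal arcs to be absent from $O$; feeding these zeros into the parity conditions at $v_2, v_4, v_6$ forces the three out-arcs $v_7 v_6, v_7 v_2, v_7 v_4$ all into $O$, so $\{v_7\}$ is a directed cut of $D^+$ in $O$, again a contradiction. For sinks and sources $v$ of $D$ (which remain unreplaced), the ``$3$''-case trivially yields the singleton directed cut $\{v\}$.

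Hence $M$ is a perfect matching of $D$ and, by hypothesis, contains a directed cut $C \subseteq M$ coming from a bipartition $(S, V \setminus S)$. Lifting via $S^+ := \bigcup_{v \in S} R_v$ (with $R_v := \{v\}$ for sinks and sources), the arcs of $D^+$ between $S^+$ and its complement are precisely the suspension arcs corresponding to $C$, so $C \subseteq M \subseteq O$ remains a directed cut in $D^+$, the desired contradiction to $O$ containing no directed cut. The main obstacle I anticipate is the parity chase in the ``$3$''-case: one must confirm that the specific combinatorics of the nine internal arcs of $R_v$ actually force one of the four singleton cuts $\{v_1\}, \{v_3\}, \{v_5\}, \{v_7\}$ whenever all three suspensions lie in $O$. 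This rigidity is the essential design feature of the gadget, and everything else is bookkeeping.
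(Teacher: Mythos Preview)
Your proposal is correct and follows essentially the same approach as the paper: a parity count on the seven vertices of $R_v$ forces an odd number of suspension arcs in $O$, the ``$3$''-case is dispatched by the observation that $v_1,v_3,v_5$ must then have $O$-degree~$1$ (else a singleton directed cut) so that $v_7$ is forced to have $O$-degree~$3$ and is a source, and the ``$1$''-case everywhere makes the suspension arcs in $O$ a perfect matching of $D$ whose assumed directed cut lifts to $D^+$. You supply more detail than the paper on the structural preservations and on the unreplaced sink/source vertices, but the argument is the same.
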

\begin{proof}
Suppose that every perfect matching of $D$ contains a directed cut and let $O\subseteq A'$ be an odd subgraph of $D^+$. Since $D^+$ is cubic, all vertices have degree 1 or 3 in $O$.

Consider some replacement gadget $R_v$ as in Figure~\ref{fig:replacement}. 
Observe that $R_v$ has an odd number of vertices $v_1,\ldots,v_7$ and every odd subgraph has an even number of vertices. Therefore, among the arcs $(w,v_3)$, $(x,v_5)$, $(v_1,u)$, an odd number has to be in $O$, since otherwise an odd number of vertices remains to be covered in $R_v$. We distinguish two cases:

Suppose $(w,v_3)$, $(x,v_5)$, $(v_1,u)$ are all in $O$. Then $v_1$, $v_3$ and $v_5$ must be of degree 1 in $O$, since otherwise they have degree 3 and we get a sink or a source in $O$. Hence, vertices $v_2,v_4,v_6,v_7$ remain to be covered by arcs of $O$ and thus $v_7$ necessarily has degree 3 in $O$ and is a source in $O$. Thus, $O$ contains a directed cut.

The remaining case is when for every replacement gadget $R_v$ of $G$, exactly one suspension arc is in $O$. In this case, by construction of $D^+$, all $O$ induces a perfect matching of $D$. By assumption, this perfect matching contains a directed cut, which also is a directed cut of $D^+$ and we are done.

It is easy to see that the construction preserves 3-connectivity, bipartiteness, and planarity and has the claimed number of vertices.
\end{proof}

Putting Lemmas~\ref{lem:duality},~\ref{lem:cubic}, and~\ref{lem:end} together we get
\begin{proposition}
 Conjectures~\ref{conj:Neumann-Lara} and~\ref{conj:Hochstattler} are equivalent to Conjectures~\ref{conj:Neumann-Lara'} and~\ref{conj:Hochstattler'}, respectively.
\end{proposition}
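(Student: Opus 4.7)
The plan is to assemble Lemmas~\ref{lem:duality}, \ref{lem:cubic}, and~\ref{lem:end} into the two equivalences, with a trivial observation bridging odd subgraphs and perfect matchings in cubic graphs.

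First I would handle Neumann-Lara. By Lemma~\ref{lem:duality}, Conjecture~\ref{conj:Neumann-Lara} is equivalent to Conjecture~\ref{conj:Hochstattler} restricted to the planar case, and then Lemma~\ref{lem:cubic} reformulates this as: every 3-connected, cubic, planar digraph contains an odd subgraph without directed cut. It therefore suffices to show that this odd-subgraph formulation is equivalent to Conjecture~\ref{conj:Neumann-Lara'}. In a cubic graph every perfect matching has degree exactly $1$ at each vertex, which is odd, so a perfect matching without directed cut is automatically an odd subgraph without directed cut; this gives one direction. For the converse I argue by contrapositive: given a 3-connected, cubic, planar digraph $D$ in which every perfect matching contains a directed cut, Lemma~\ref{lem:end} produces $D^+$ which is again 3-connected, cubic, planar, and in which every odd subgraph contains a directed cut, so $D^+$ is a counterexample to the odd-subgraph reformulation.

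The argument for Hochst\"attler is identical in structure. Lemma~\ref{lem:cubic} reformulates Conjecture~\ref{conj:Hochstattler} (dropping planarity) as: every 3-connected, cubic digraph contains an odd subgraph without directed cut. The equivalence with Conjecture~\ref{conj:Hochstattler'} then follows exactly as above: a perfect matching is a special odd subgraph, and for the contrapositive direction Lemma~\ref{lem:end} converts a cubic counterexample to the perfect-matching version into one for the odd-subgraph version. The only minor bookkeeping is that Conjecture~\ref{conj:Hochstattler'} is phrased in terms of 3-edge-connectivity while Lemma~\ref{lem:end} preserves vertex $3$-connectivity; since these two notions coincide on cubic graphs, no subtlety arises.

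I do not expect a serious obstacle---the substance already lives in Lemmas~\ref{lem:duality}--\ref{lem:end}. The one tempting pitfall is that $D^+$ in Lemma~\ref{lem:end} is defined starting from a cubic $D$, so one has to first pass through the cubic reformulation from Lemma~\ref{lem:cubic} before invoking Lemma~\ref{lem:end}; but in the present setup both sides of each claimed equivalence are already restricted to cubic graphs, so this is automatic.
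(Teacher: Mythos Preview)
Your proposal is correct and follows precisely the route the paper intends: the paper's own proof is the single line ``Putting Lemmas~\ref{lem:duality},~\ref{lem:cubic}, and~\ref{lem:end} together we get,'' and you have spelled out exactly how those three lemmas combine, including the bridging observation that a perfect matching is an odd subgraph and the (correct) remark that $3$-edge- and $3$-vertex-connectivity coincide for cubic graphs.
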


Note that the statement corresponding to the top of Figure~\ref{fig:diagram} is that every cubic, 3-connected graph is Hamiltonian. The first counterexample to this statement is the Petersen graph. While it is neither bipartite, nor planar, all of its orientations satisfy Conjecture~\ref{conj:Hochstattler}, that is, every orientation of the Petersen graph contains an odd subgraph without directed cut~\cite{H17}. By computer we verified that each of these orientations even has a perfect matching containing no directed cut, i.e. the orientations of the Petersen graph satisfy Conjecture~\ref{conj:Hochstattler'}. However, we will see in the next section that the Petersen graph plays a crucial role for the counterexamples to Conjectures~\ref{conj:Hochstattler} and~\ref{conj:Hochstattler'}.

\section{Counterexamples to Conjectures~\ref{conj:Hochstattler} and~\ref{conj:Hochstattler'} from $a$-arcs}
\label{sec:construction}

An edge $\{u,v\}$ in a 3-connected, cubic graph $G$ is called an \emph{a-edge}, if every perfect matching of $G$ that contains $\{u,v\}$, also contains a cut. Equivalently, every Hamiltonian cycle of $G$ must pass through this edge. The name $a$-edge was defined in~\cite{B66} (also used in~\cite{HMcK88}). The similar notion can be defined for directed graphs: an arc $(u,v)$ in a partially directed, 3-connected, cubic graph $D$ is called an \emph{a-arc} if every perfect matching of $D$ that contains $(u,v)$ also contains a directed cut. Clearly an $a$-arc $(u,v)$ of a directed graph corresponds to the $a$-edge $\{u,v\}$ of the underlying undirected graph.
For an $a$-arc $(u,v)$ we denote by $v',v''$ the other two neighbors of vertex $v$ in $D$. Since the edges $\{v,v'\}, \{v,v''\}$ are not contained in any perfect matching containing $\{u,v\}$, we can assume that  $\{v,v'\}, \{v,v''\}$ are not directed in $D$.

Given a partially directed, 3-connected, cubic graph $D$ with a specified $a$-arc $(u,v)$, we construct two partially directed, 3-connected, cubic graphs $\hat{D}$ and $\widetilde{D}$, both having the property that every perfect matching contains a directed cut. The construction $\hat{D}$ yields the smallest counterexample to Conjecture~\ref{conj:Hochstattler'} (see Figure~\ref{subfig:smallestcount}), whereas $\widetilde{D}$ yields the smallest known counterexample to Conjecture~\ref{conj:Hochstattler}. Moreover, $\widetilde{D}$ preserves bipartiteness and planarity of $D$.
For both constructions we first build the graph $
D'$ by splitting vertex $u$ into $u,u',u''$ such that each has exactly the corresponding neighbor among $v, v',v''$, see Figure~\ref{subfig:a-arcs-constructions1}

\begin{figure}[!ht]
 \centering
 \subfloat[the graph $D'$]{\label{subfig:a-arcs-constructions1}\includegraphics[scale=1]{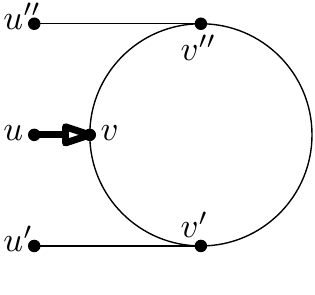}}\hspace*{0.4cm}
 \subfloat[the graph $\hat{D}$; red and blue arcs form directed cuts, respectively.]{\label{subfig:a-arcs-constructions2}\includegraphics[scale=1]{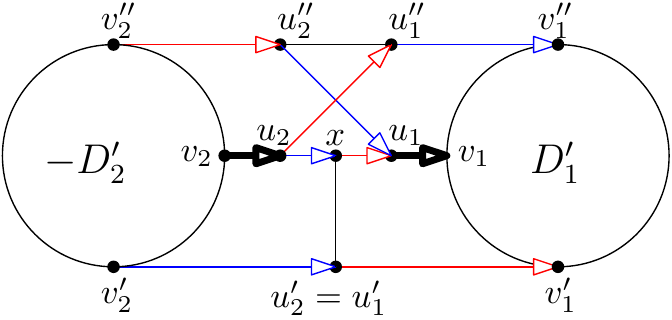}}
  \subfloat[the graph $\widetilde{D}$, $y=u_1=u_2=u_3$]{\label{subfig:a-arcs-constructions3}\includegraphics[scale=1]{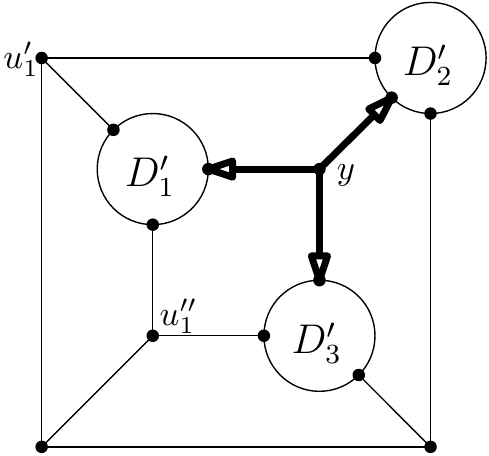}}
 \caption{Counterexamples from $a$-arcs}
\label{fig:constructions}
\end{figure}

We build the graph $\hat{D}$ from two copies of $D'$: $D'_1$ (isomorphic to $D'$) and an oppositely oriented copy $-D'_2$. We connect these copies as shown in Figure~\ref{subfig:a-arcs-constructions2}. We call this graph $\hat{D}$. 

\begin{lemma}
\label{lem:allPMhaveACut'}
Every perfect matching of $\hat{D}$ contains a directed cut.
\end{lemma}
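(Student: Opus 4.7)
The plan is to analyze an arbitrary perfect matching $M$ of $\hat{D}$ by looking at how it interacts with the six split copies of $u$ (three in each of $D'_1$ and $-D'_2$) and the connector arcs joining them in Figure~\ref{subfig:a-arcs-constructions2}. In each copy $D'_i$, the vertex $v_i$ has degree three in $\hat{D}$ with neighbors $u_i, v'_i, v''_i$, so exactly one of the three edges at $v_i$ is in $M$; this gives the natural trichotomy I would branch on.

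First I would handle the case in which, for at least one copy $D'_i$, the suspension arc $(u_i, v_i)$ belongs to $M$. Sewing the three split vertices $u_i, u'_i, u''_i$ back into a single vertex $u$, the restriction of $M$ to the internal edges of that copy, together with the suspension arc $(u_i, v_i)$, yields a perfect matching $M_i$ of the original graph $D$ that contains the $a$-arc $(u,v)$. By the defining property of an $a$-arc, $M_i$ contains a directed cut of $D$. I would then argue that this cut lifts to a directed cut of $\hat{D}$: since $-D'_2$ is oppositely oriented, the connector arcs that cross the lifted cut do so in a way that is compatible with both orientations, so no arc at the interface points the wrong way and the lifted separation remains a directed cut.

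The remaining case is when in both copies the matching avoids the corresponding arc at $v$; then each $v_i$ is matched by $\{v_i, v'_i\}$ or $\{v_i, v''_i\}$, and all six split vertices $u_i, u'_i, u''_i$ ($i=1,2$) are matched via connector arcs. I would enumerate the small number of resulting patterns, tracking which connector arcs are forced into $M$. The claim, illustrated by the colouring in Figure~\ref{subfig:a-arcs-constructions2}, is that in every such pattern $M$ contains entirely either the red or the blue set of connector arcs, each of which is a directed cut of $\hat{D}$ by construction. The main obstacle is therefore combinatorial rather than conceptual: verifying that the red and blue arc sets are genuine directed cuts separating explicit vertex subsets of $\hat{D}$, and that the case enumeration in this ``no-suspension-at-$v$'' scenario is truly exhaustive. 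Once those two checks are done, the $a$-arc hypothesis disposes of every other branch and the lemma follows.
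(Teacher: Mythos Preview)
Your overall architecture---use the $a$-arc property when the suspension arc $(u_i,v_i)$ lies in $M$, and find the red/blue cut otherwise---is the same as the paper's. But the case split you set up is not quite the one that works, and both branches have genuine gaps.

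The paper does \emph{not} branch on the edge of $M$ incident with $v_i$. It first observes, by parity of $D'_1$, that $M$ contains either all three of $(u_1,v_1),(u'_1,v'_1),(u''_1,v''_1)$ or exactly one of them; the ``all three'' case is dispatched immediately because those three equally directed arcs themselves form a directed cut. Your Case~1 silently assumes that when $(u_i,v_i)\in M$ the other two suspension arcs are absent, so that sewing $u_i,u'_i,u''_i$ back into $u$ really does give a perfect matching of $D$. Without the parity argument that assumption is unjustified, and in the ``all three'' sub-case the sewing produces a vertex of degree~$3$ in $M$, not a matching.

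Your Case~2 has the complementary problem. You assert that when both arcs $(u_i,v_i)$ are absent, ``all six split vertices $u_i,u'_i,u''_i$ are matched via connector arcs.'' Parity forbids this: exactly one suspension arc on each side must lie in $M$, so one of $u'_i,u''_i$ is matched \emph{into} $D'_i$, not across the connector. The paper's remaining cases are accordingly indexed by which of $(u'_1,v'_1),(u''_1,v''_1)$ and which of $(v'_2,u'_2),(v''_2,u''_2)$ is present, and only then does one read off the blue or red cut.

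Finally, your worry in Case~1 about connector arcs crossing the lifted cut and about the opposite orientation of $-D'_2$ is misplaced: the directed cut coming from the $a$-arc is a set of arcs entirely inside $D'_i$, and as such it separates $\hat D$ just as it separates $D$, with no connector arc involved. The real missing ingredient is the parity observation, not any delicate compatibility at the interface.
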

\begin{proof}
Let $M$ be a perfect matching of $\hat{D}$.
Note that by the parity of $D'_1$, $M$ contains either all the three arcs $(u_1,v_1),(u'_1,v'_1),(u''_1,v''_1)$ or exactly one of them.  However, if it was all three, this would yield a directed cut in $M$.  Symmetrically, $M$ contains exactly one arc among $(v_2,u_2),(v'_2,u'_2),(v''_2,u''_2)$.
Thus, we have one of the following three situations, where in each case we suppose that none of the previous ones apply:
\begin{itemize}
\item $M$ contains $(u_1,v_1)$ or $(v_2,u_2)$. Since each of these arcs corresponds to an $a$-arc of $D$, the restriction $M'$ of $M$ to the corresponding copy of $D'$ contains a directed cut of $D'$. This directed cut is contained in $M$ and is a directed cut of $\hat{D}$.
\item $M$ contains the arc $(v_2',u_2')$. Then $M$ also contains $(u''_1,v''_1)$ and one deduces that $M$ contains the blue directed cut.
\item $M$ contains the arc $(v_2'',u_2'')$. Then $M$ also contains $(x,u_1)$ and one deduces that $M$ contains the red directed cut.
\end{itemize}
\end{proof}


\begin{figure}[!ht]
 \centering
 \subfloat[partial orientation $P$ of the Petersen graph with the two matchings containing arc $(u,v)$.]{\label{subfig:petersen}\includegraphics[scale=0.82]{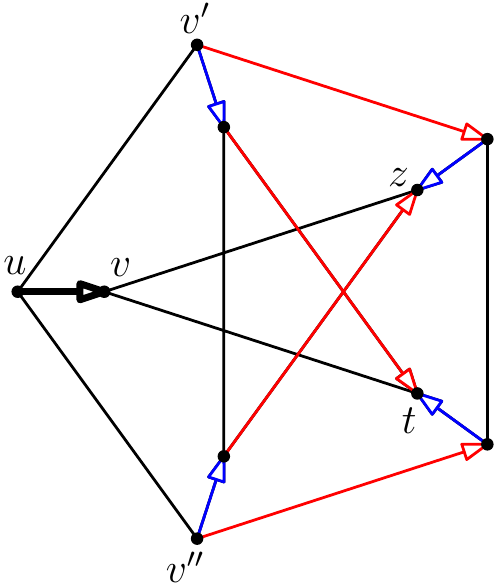}}\hspace*{0.5cm}
 \subfloat[smallest cubic, 3-connected, partially directed graph where each perfect matching contains a directed cut.]{\label{subfig:smallestcount}\includegraphics[scale=0.82]{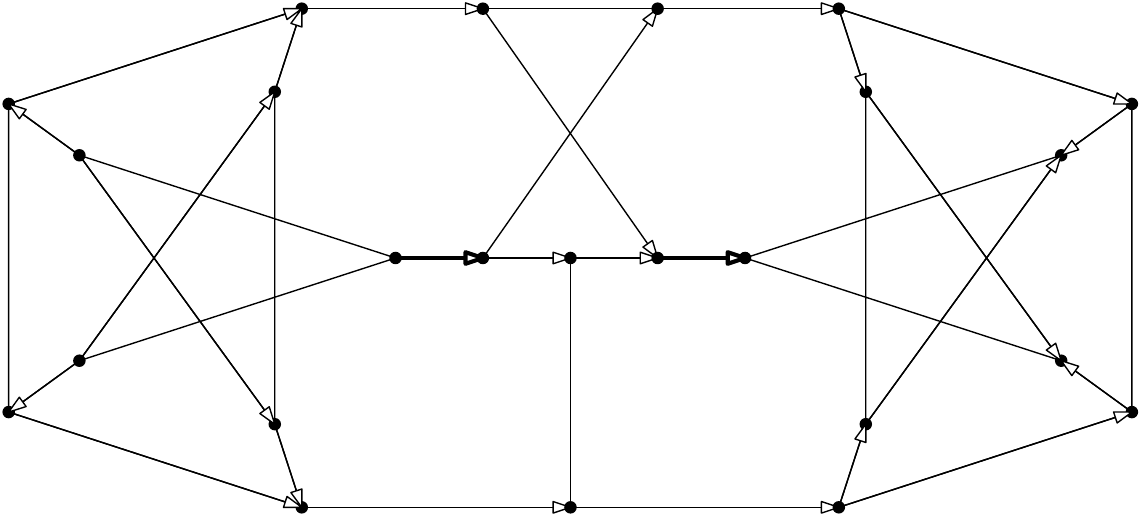}}
 \caption{The construction of the smallest counterexample to Conjecture~\ref{conj:Hochstattler'}}
 \label{fig:counterexample}
\end{figure}

Let us now use Lemma~\ref{lem:allPMhaveACut'} to give the smallest counterexample to Conjecture~\ref{conj:Hochstattler'}. 

\begin{observation}
\label{obs:petersenDirectedCuts}
In the partial orientation $P$ of the Petersen graph given in Figure~\ref{subfig:petersen}, each of the (two) perfect matchings with arc $(u,v)$ contains a directed cut, i.e., $(u,v)$ is an $a$-arc.
\end{observation}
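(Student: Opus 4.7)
The plan is to first establish that exactly two perfect matchings of the Petersen graph contain the edge $\{u,v\}$, and then, for each of them, to exhibit a directed cut of the partial orientation $P$ whose arcs all lie in the matching. The count is a one-line double counting: the Petersen graph has $15$ edges and exactly $6$ perfect matchings (each of size $5$), so every edge lies in $6\cdot 5/15 = 2$ perfect matchings, and in particular so does $\{u,v\}$.

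To produce the two matchings explicitly, I would delete $u$ and $v$ together with the four edges incident to $\{u,v\}$ other than $\{u,v\}$ itself, and enumerate perfect matchings of the resulting $8$-vertex subgraph of the Petersen graph. Because the Petersen graph has girth $5$, this subgraph is sparse and the enumeration is extremely short; it yields exactly two completions $M_1, M_2$, each of which forms, together with $\{u,v\}$, a perfect matching of the Petersen graph.

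For each $M_i$ I would then inspect the partial orientation $P$ from Figure~\ref{subfig:petersen} and look for three arcs of $M_i$ forming a directed $3$-edge-cut. Natural candidates are the star cuts around individual vertices of $P$ (any vertex that is incident to three arcs of $M_i$ all oriented towards it or all away from it produces such a cut), and the cut separating the outer $5$-cycle from the inner pentagram in the standard drawing of the Petersen graph. Since $P$ is only partially oriented, one must also verify that the three arcs realising the candidate cut are all directed in $P$, not left undirected.

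The content of the statement is entirely finite, so the main obstacle is not conceptual but book-keeping: I must verify the property for both $M_1$ and $M_2$, not just one of them, and correctly read off the orientations from $P$. Once the two matchings are written down and a candidate $3$-edge-cut is chosen for each, the verification reduces to checking the orientations of at most six arcs in Figure~\ref{subfig:petersen}, making the observation a straightforward inspection.
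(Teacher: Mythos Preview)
The paper offers no formal proof of this observation; it is left as a direct inspection of Figure~\ref{subfig:petersen}, whose caption already indicates that the two perfect matchings through $(u,v)$ are drawn in the picture. Your overall plan---count the matchings through $\{u,v\}$, list them, and exhibit a directed cut in each---is exactly this inspection made explicit, and your double-counting argument that each edge of the Petersen graph lies in precisely $6\cdot 5/15=2$ perfect matchings is a clean way to justify the word ``two'' in the statement.

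There is, however, a genuine slip in your search heuristics. You suggest looking at star cuts, i.e.\ checking for a vertex ``incident to three arcs of $M_i$''. Since $M_i$ is a perfect matching, every vertex is incident to exactly one edge of $M_i$; no star cut can ever be contained in a matching. More generally, in the Petersen graph every edge-cut of size at most~$4$ separates off one or two vertices, and the same degree argument shows none of these can lie inside a matching. So the only possible cut contained in a $5$-edge matching is the matching itself. This is consistent: the Petersen graph is non-Hamiltonian, so the complement of each perfect matching is a $2$-factor consisting of two disjoint $5$-cycles, and hence every perfect matching \emph{is} a $5$-edge-cut. The verification therefore reduces, for each $M_i$, to checking that all five of its edges are directed in $P$ and all point from one $5$-cycle of the complementary $2$-factor to the other. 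Your mention of the outer-cycle/inner-pentagram cut is the right idea for one of the two matchings (the spokes), but for the other matching the relevant pair of $5$-cycles is different and must be read off once $M_i$ is written down.
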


The corresponding graph $\hat{P}$ coming from Lemma~\ref{lem:allPMhaveACut'} has $24$ vertices and is depicted in Figure~\ref{subfig:smallestcount}. Together, with our computational results in Section~\ref{sec:computation} we get:
\begin{proposition}
 The graph of Figure~\ref{subfig:smallestcount} is the smallest counterexample to Conjecture~\ref{conj:Hochstattler'}.
\end{proposition}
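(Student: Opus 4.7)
The proof divides naturally into two independent tasks: an \emph{upper bound} showing that $\hat P$ is a counterexample on $24$ vertices, and a \emph{lower bound} certifying that no 3-edge-connected cubic digraph on fewer than $24$ vertices violates Conjecture~\ref{conj:Hochstattler'}. The upper bound is a short application of the machinery already developed in this section, while the lower bound is the computationally substantive part.

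For the upper bound, I would instantiate the construction of Section~\ref{sec:construction} with $D=P$, the partial orientation of the Petersen graph from Figure~\ref{subfig:petersen}, and the $a$-arc $(u,v)$ supplied by Observation~\ref{obs:petersenDirectedCuts}. Lemma~\ref{lem:allPMhaveACut'} then guarantees that every perfect matching of $\hat P$ contains a directed cut. It remains to verify that $\hat P$ satisfies the hypotheses of Conjecture~\ref{conj:Hochstattler'}. Cubicity is immediate from the construction; the vertex count is $2(|V(P)|+2)=24$, since splitting $u$ into $u,u',u''$ adds two vertices and $\hat P$ glues together two such copies; and 3-edge-connectivity follows by a short case analysis on the possible location of a hypothetical 2-edge-cut, exploiting the 3-edge-connectivity of Petersen together with the structure of the four arcs that bridge the two copies in the $\hat D$-construction.

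For the lower bound, the plan is to check by computer that every 3-edge-connected cubic digraph $D=(V,A)$ on $n\in\{4,6,\ldots,22\}$ vertices admits a perfect matching that contains no directed cut. One would generate, for each such $n$, all 3-edge-connected cubic graphs (for instance with McKay's \texttt{geng}) together with their inequivalent (possibly partial) orientations, and for each resulting digraph iterate over its perfect matchings $M$, testing whether $D/(A\setminus M)$ is strongly connected; by Lemma~\ref{lem:cubic} this is equivalent to $M$ containing no directed cut, and it is a fast check. The principal obstacle is the combinatorial explosion at $n=22$: even after quotienting by graph automorphisms the number of orientations is large, so exploiting the full automorphism group of each underlying graph for orientation canonicalization, together with aggressive early termination once a good perfect matching is located, is essential for tractability. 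Granted the successful completion of this enumeration, as reported in Section~\ref{sec:computation}, combining it with the upper bound yields the proposition.
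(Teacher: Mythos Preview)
Your approach is essentially the paper's: the upper bound comes from Observation~\ref{obs:petersenDirectedCuts} plus Lemma~\ref{lem:allPMhaveACut'}, and the lower bound is delegated to the computer search of Section~\ref{sec:computation}. Two small points of divergence are worth flagging.

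First, the computational strategy you outline --- enumerate all (canonicalized) orientations of each underlying cubic graph and test each for a good matching --- is not what the paper actually does. The paper instead fixes the undirected graph, iterates over its perfect matchings, and branches on which minimal cut inside the current matching to direct; this builds up a partial orientation and backtracks on conflicts, avoiding the enumeration of full orientations altogether. Your method is correct in principle but likely impractical at $n=22$, which is why the paper's branch-and-bound is the one that was run.

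Second, your lower bound only rules out $n\le 22$. To justify the definite article in ``\emph{the} smallest counterexample'' one must also check that no other $24$-vertex instance works; the paper reports doing exactly this (finding that Figure~\ref{subfig:smallestcount} is the unique partially directed graph on $24$ vertices with the property), and you should cite that part of Section~\ref{sec:computation} as well.
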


Let us now describe the second construction $\widetilde{D}$, which will lead to the smallest known counterexample to Conjecture~\ref{conj:Hochstattler}. Let again $D$ and $D'$ be as in the beginning of the section.  We build $\widetilde{D}$ by replacing three vertices of the cube-graph $Q_3$ by copies $D'_1,D'_2,D'_3$ of $D'$ and by identifying vertices $u_1,u_2,u_3$  to the vertex $y$ as shown in Figure~\ref{subfig:a-arcs-constructions3}. The graph $\widetilde{D}$ is 3-connected, cubic and the construction preserves planarity and bipartiteness of $D$.

\begin{lemma}
\label{lem:allPMhaveACut}
Every perfect matching of $\widetilde{D}$ contains a directed cut.
\end{lemma}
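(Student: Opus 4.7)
My plan is to mirror the proof of Lemma~\ref{lem:allPMhaveACut'}, adapted to the three copies $D'_1, D'_2, D'_3$ of $D'$ glued at the central vertex $y$. Let $M$ be an arbitrary perfect matching of $\widetilde{D}$; the goal is to exhibit a directed cut of $\widetilde{D}$ contained in $M$.

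The first step is the parity observation. For each copy $D'_i$, the same counting argument as in Lemma~\ref{lem:allPMhaveACut'} shows that $M$ must contain either all three, or exactly one, of the interface arcs $(y,v_i)$, $(u'_i,v'_i)$, $(u''_i,v''_i)$. Since $y$ is matched to a single vertex in $M$, the arc $(y,v_i)$ is in $M$ for at most one index $i \in \{1,2,3\}$, so at most one copy $D'_i$ can contain all three of its interface arcs in $M$.

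Next I would split into two cases. In Case~1, some copy $D'_i$ contains all three of its interface arcs in $M$. Then, exactly as in the first bullet of the proof of Lemma~\ref{lem:allPMhaveACut'}, the restriction of $M$ to $D'_i$ behaves like a perfect matching of $D$ containing the $a$-arc $(u,v)$; by the $a$-arc property this restriction contains a directed cut of $D'_i$, which is also a directed cut of $\widetilde{D}$ inside $M$. In Case~2, each $D'_i$ contains exactly one interface arc in $M$. Then at least two of the three copies must have their unique interface arc equal to $(u'_i,v'_i)$ or $(u''_i,v''_i)$, so their remaining ports among $\{u'_i,u''_i\}$ are matched externally into the $Q_3$-backbone of $\widetilde{D}$. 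A case analysis on these external matching edges, analogous to the red and blue cuts appearing in Lemma~\ref{lem:allPMhaveACut'}, should produce an explicit directed cut of $\widetilde{D}$ contained in $M$ in every subcase.

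The main obstacle I anticipate is in Case~2. In Lemma~\ref{lem:allPMhaveACut'} the two copies carry opposite orientations, which makes the red and blue cuts visible at once, whereas here all three copies of $D'$ are oriented the same way and are glued around the cube $Q_3$. The hope is that the $3$-fold symmetry of the gluing, together with the identification $u_1 = u_2 = u_3 = y$, cuts Case~2 down to a small number of representative subcases, each resolvable by reading off a specific directed cut from Figure~\ref{subfig:a-arcs-constructions3}.
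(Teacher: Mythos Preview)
Your proposal has a genuine gap in both cases, and misses the key simplification that makes the paper's proof work in two lines.

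In your Case~1 the reasoning is wrong: if all three interface arcs of some $D'_i$ lie in $M$, the restriction of $M$ to $D'_i$ does \emph{not} correspond to a perfect matching of $D$ containing the $a$-arc $(u,v)$ --- after un-splitting, the vertex $v$ would be covered three times. In Lemma~\ref{lem:allPMhaveACut'} that case was handled differently (the three arcs themselves formed a directed cut); here the paper instead argues that this case is simply impossible, because the five remaining $Q_3$-vertices together with $D'_2$ and $D'_3$ cannot then be perfectly matched.

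The more serious gap is Case~2. You propose to hunt for directed cuts in the $Q_3$-backbone ``analogous to the red and blue cuts'', but the cube edges of $\widetilde{D}$ are not oriented at all, so no such cuts exist there. What you are overlooking is that your own setup already finishes the proof: since $y=u_1=u_2=u_3$, the unique matching edge at $y$ is precisely the $a$-arc $(u_i,v_i)$ of some copy $D'_i$. By parity (and the impossibility of the all-three case) this is the \emph{only} interface arc of $D'_i$ in $M$, so $M$ restricted to $D'_i$ is a perfect matching of $D$ containing the $a$-arc $(u,v)$. The $a$-arc property then yields a directed cut inside $D'_i$, hence in $\widetilde{D}$. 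There is no Case~2 analysis to do; the identification $u_1=u_2=u_3=y$ is exactly what forces the $a$-arc to be used in whichever copy $y$ matches into.
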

\begin{proof}
Let $M$ be a perfect matching of $\widetilde{D}$. Exactly one of the arcs incident to vertex $y$ is in $M$. Say this is the arc to $D'_1$. By parity of $D'_1$, either both edges connecting $u'_1$ and $u''_1$ to $D'_1$ are in $M$ or none of them. If both are in $M$, then it is easy to see, that $M$ cannot be a perfect matching of $\widetilde{D}$. Therefore, $M$ contains a perfect matchings $M'$ of $D'_1$ that corresponds to a perfect matching of $D$. Since $(u,v)$ is an $a$-arc of $D$, $M'$ contains a directed cut of $D'$. This directed cut is contained in $M$ and is a directed cut of $\widetilde{D}$.
\end{proof}

Again, let $P$ be the same orientation of the Petersen graph depicted in Figure~\ref{subfig:petersen}, which by Observation~\ref{obs:petersenDirectedCuts} has an $a$-arc. With $32$ vertices, the graph $\widetilde{P}$ is a larger counterexample to Conjecture~\ref{conj:Hochstattler'} than $\hat{P}$. However, by means of Lemma~\ref{lem:end} the graph $\widetilde{P}$ can be used to construct a relatively small counterexample to Conjecture~\ref{conj:Hochstattler}. More precisely, we fully orient $\widetilde{P}$, such that in every copy of $P$, vertices $v', v''$ are sources and vertices $z,t$ are sinks. Finally, the remaining non-oriented edges of the cube $Q_3$ are oriented so that all of the vertices of $Q_3$ (except the ones where a copy of $P$ was inserted) are sinks or sources. Since in Lemma~\ref{lem:end} only vertices that are neither sinks nor sources have to be replaced by a gadget of $7$ vertices, this results in a cubic counterexample to Conjecture~\ref{conj:Hochstattler} of order $32+6\cdot 5\cdot 3=122$. We obtain:

\begin{proposition}\label{prop:Hochstattlercounter}
There is a counterexample to Conjecture~\ref{conj:Hochstattler} on 122 vertices.                                                                                                                                                                                                                                                                                                                                                                                                                                                                                                                                                                                                                                                                                                                                                                                                                                              \end{proposition}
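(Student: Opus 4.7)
The plan is to chain together the two constructions developed in this section: Lemma~\ref{lem:allPMhaveACut}, which starting from a partially directed cubic graph with an $a$-arc produces a 3-connected cubic graph in which every perfect matching contains a directed cut; and Lemma~\ref{lem:end}, which converts such a graph (once fully oriented) into a cubic digraph in which every odd subgraph contains a directed cut. By Lemma~\ref{lem:cubic} this last property is exactly what is needed to contradict Conjecture~\ref{conj:Hochstattler}.

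First I would take the partial orientation $P$ of the Petersen graph depicted in Figure~\ref{subfig:petersen}, whose arc $(u,v)$ is an $a$-arc by Observation~\ref{obs:petersenDirectedCuts}. Applying the $\widetilde{D}$ construction of Lemma~\ref{lem:allPMhaveACut} to $P$ yields a partially directed 3-connected cubic graph $\widetilde{P}$ on $32$ vertices in which every perfect matching contains a directed cut. The next step is to fully orient $\widetilde{P}$ while preserving this property; this is automatic, because a directed cut inside a perfect matching consists only of edges that are already arcs in the partial orientation, hence remains a directed cut after any extension of the orientation. The orientation should then be chosen to minimize the number of vertices that are neither sinks nor sources, in order to keep the vertex blow-up of Lemma~\ref{lem:end} as small as possible: within each of the three embedded copies of $P$ I would orient the remaining undirected edges so that $v',v''$ become sources and $z,t$ become sinks, and I would orient the remaining non-directed edges of the cube $Q_3$ so that every cube vertex not lying inside a copy of $P$ is itself a source or a sink.

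Finally I would apply Lemma~\ref{lem:end} to the resulting fully oriented cubic digraph. A direct count inside each copy of $P$ shows that exactly five vertices per copy remain that are neither sinks nor sources, for a total of $15$ such vertices. The lemma then produces a cubic digraph $\widetilde{P}^{+}$ on $32+6\cdot 15=122$ vertices, which is 3-edge-connected and in which every odd subgraph contains a directed cut; by Lemma~\ref{lem:cubic} this is precisely a counterexample to Conjecture~\ref{conj:Hochstattler}. The main step needing care is the explicit orientation inside each copy of $P$: one must check on the partial orientation of Figure~\ref{subfig:petersen} that the free edges can indeed be oriented so that $v',v''$ are simultaneously sources, $z,t$ simultaneously sinks, and that the count of non-extremal vertices is really five per copy. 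This is a finite verification on the Petersen graph and constitutes the only real obstacle; everything else in the argument is provided by the lemmas already established in this section.
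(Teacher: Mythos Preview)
Your proposal is correct and follows essentially the same route as the paper: build $\widetilde{P}$ from the Petersen $a$-arc via Lemma~\ref{lem:allPMhaveACut}, extend to a full orientation making $v',v''$ sources and $z,t$ sinks in each copy and the remaining $Q_3$-vertices extremal, then apply Lemma~\ref{lem:end} to the $5\cdot 3=15$ non-extremal vertices to obtain $32+6\cdot 15=122$. Your added remark that extending a partial orientation preserves any directed cut already present in a matching is a useful justification the paper leaves implicit.
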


Note that any full orientation of the graph of Figure~\ref{subfig:smallestcount} has at most 4 vertices which are sinks or sources. Therefore, if we apply Lemma~\ref{lem:end} to this graph, the smallest counterexample to Conjecture~\ref{conj:Hochstattler} we can get has order $24+20\cdot 6 = 144$

\section{Searching for counterexamples}\label{sec:computation}
In this section we first give some easy observations for reducing the search space, then we explain how we checked the conjectures on large sets of digraphs computationally. Afterwards, we summarize our computational results.

\subsection{Properties of minimum counterexamples}
It is well known, that contracting a triangle in a counterexample to Conjecture~\ref{conj:Tait} yields a smaller counterexample. The same is easily seen to hold for Conjectures~\ref{conj:Neumann-Lara'} and~\ref{conj:Hochstattler'}. We therefore have:

\begin{lemma}~\label{lem:girth4}
Let $G$ be a cubic (partially directed) graph with a triangle $\Delta$ and let $G_{\Delta}$ be the graph obtained from $G$ by contracting $\Delta$ into a single vertex.

Then the following holds:
\begin{enumerate}
\item If $G$ is a counterexample to Conjectures~\ref{conj:Tait}, ~\ref{conj:Neumann-Lara'} or~\ref{conj:Hochstattler'}, then $G_{\Delta}$ is also a counterexample.
\item If $G$ is $3$-connected and has an $a$-edge ($a$-arc), then $G_{\Delta}$ has an $a$-edge ($a$-arc).
\end{enumerate}
\end{lemma}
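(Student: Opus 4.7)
The plan is to set up a natural correspondence between perfect matchings of $G_\Delta$ and those perfect matchings of $G$ using exactly one edge of $\Delta$, and then to transport (directed) cuts through this correspondence. Write $\Delta=abc$, with external neighbors $a',b',c'$ of $a,b,c$. Since $G$ is cubic, the contracted vertex $v_\Delta$ has exactly the three neighbors $a',b',c'$, and $G_\Delta$ remains cubic; preservation of planarity and of $3$-(edge-)connectivity under contraction of a triangle in a cubic graph is standard. Every perfect matching $M'$ of $G_\Delta$ contains exactly one edge at $v_\Delta$, and if this edge is $\{v_\Delta,c'\}$, its \emph{lift}
\[
M := \bigl(M'\setminus\{v_\Delta,c'\}\bigr)\cup\{c,c'\}\cup\{a,b\}
\]
is a perfect matching of $G$; conversely, every perfect matching of $G$ using exactly one edge of $\Delta$ arises in this way.

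For~(1), assume every perfect matching of $G$ contains a (directed) cut, fix a perfect matching $M'$ of $G_\Delta$ with lift $M$, and choose a (directed) cut $C=\delta^{(+)}(X)\subseteq M$. The decisive observation is that because $\{a,b\}\in M$, the four edges $\{a,c\},\{b,c\},\{a,a'\},\{b,b'\}$ all lie outside $M\supseteq C$; hence $c$ lies on the same side of the cut as $a$ and $b$, and so do $a',b'$. The only edge of $C$ that can be incident to $\Delta$ is therefore $\{c,c'\}$, and it lies in $C$ precisely when $c'\notin X$. Setting $X_\Delta:=(X\setminus\{a,b,c\})\cup\{v_\Delta\}$, the set
\[
C' := (C\setminus\{c,c'\})\cup\bigl(\{v_\Delta,c'\}\text{ if }\{c,c'\}\in C\bigr)
\]
equals $\delta^{(+)}_{G_\Delta}(X_\Delta)$, lies in $M'$, and in the directed case $\delta^-_{G_\Delta}(X_\Delta)=\emptyset$ follows directly from $\delta^-_G(X)=\emptyset$. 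Thus $M'$ contains a (directed) cut.

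For~(2), let $e$ be an $a$-edge (resp.~$a$-arc) of $G$. If $e$ is disjoint from $\Delta$, then $e$ persists in $G_\Delta$ and the lift--transport argument of~(1) immediately shows that $e$ is an $a$-edge/arc there. Otherwise $e$ is either an external edge $\{c,c'\}$ or a triangle edge $\{a,b\}$, and in either situation the edge $\{v_\Delta,c'\}$ of $G_\Delta$ plays the role of $a$-edge/arc: every perfect matching $M'$ of $G_\Delta$ containing $\{v_\Delta,c'\}$ lifts to a perfect matching $M$ of $G$ containing both $\{c,c'\}$ and $\{a,b\}$, hence containing $e$, so the $a$-edge/$a$-arc hypothesis on $e$ produces a (directed) cut in $M$, which~(1) transports to $M'$.

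The step I expect to require the most care is the cut transport in~(1): one must verify simultaneously that $C'$ is genuinely a cut in $G_\Delta$, that $C'\subseteq M'$, and, in the directed case, that $C'$ is still a \emph{directed} cut. All three ultimately follow from the rigid structural observation that $\{a,b\}\in M$ confines the whole triangle (together with $a'$ and $b'$) to a single side of~$C$, but the argument needs a careful case split on whether $c'\in X$. A minor cosmetic subtlety in the arc version of~(2) is that for $\{v_\Delta,c'\}$ to actually be an \emph{arc} one needs $\{c,c'\}$ to be directed in $G$; this is automatic when the ambient digraph is fully oriented, which is the setting of interest.
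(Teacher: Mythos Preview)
The paper does not give a proof of this lemma; it asserts item~(1) as well known for Tait's conjecture and ``easily seen'' for the directed versions, and offers nothing at all for item~(2). Your argument fills this gap correctly: the lift correspondence between perfect matchings of $G_\Delta$ and perfect matchings of $G$ using exactly one triangle edge, together with the observation that $\{a,b\}\in M$ forces all of $a,b,c,a',b'$ onto one side of any cut contained in $M$, is exactly what is needed, and the transport of (directed) cuts goes through as you describe. (For tidiness you should say once that, after possibly replacing $X$ by its complement, you may assume $a,b,c\in X$ before defining $X_\Delta$; as written your formula for $X_\Delta$ only makes sense under that convention.)

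One small correction to your closing caveat: the paper's setting for $a$-arcs is explicitly \emph{partially} directed graphs, so the issue you flag is not automatically void. It is nonetheless harmless. If the $a$-arc is a triangle edge $(a,b)$ and $\{c,c'\}$ is undirected in $G$, then any directed cut $C\subseteq M$ in the lift cannot contain $\{c,c'\}$ (an undirected edge cannot belong to a directed cut), so the transported cut in $M'$ avoids $\{v_\Delta,c'\}$ altogether. Hence every perfect matching of $G_\Delta$ through $\{v_\Delta,c'\}$ already contains a directed cut, and orienting $\{v_\Delta,c'\}$ either way yields an $a$-arc. This is all that is needed for the paper's use of item~(2), namely reducing the search for $a$-arcs to graphs of girth at least~$4$.
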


\begin{figure}[!ht]
\centering
\subfloat[Constructing $G_{u,v}$ from $G$]{\label{subfig:tutte1}\includegraphics[scale=1]{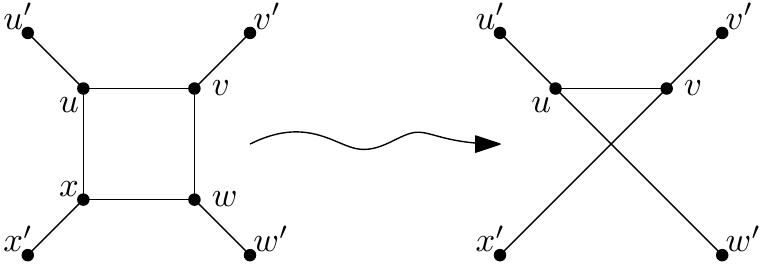}}\hspace*{2cm}
 \subfloat[hypothetical placement of $e$ and $e'$ in  Lemma~\ref{lem:girth6}.]{\label{subfig:tutte2}\includegraphics[scale=1]{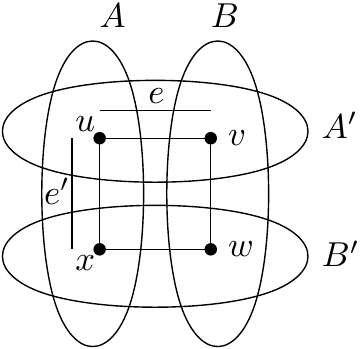}}
 \caption{Cases described in the proof of Lemma~\ref{lem:girth6}}
 \label{fig:tutte}
\end{figure}

A similar result holds for Conjecture~\ref{conj:Tutte}. 
\begin{lemma}~\label{lem:girth6}
 Any minimum counterexample to Conjecture~\ref{conj:Tutte} has girth at least $6$.
\end{lemma}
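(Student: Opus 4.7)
My approach is by contradiction: let $G$ be a minimum counterexample to Conjecture~\ref{conj:Tutte} and suppose $G$ contains a $4$-cycle $C = uvwz$ (listed in cyclic order). Since $G$ is bipartite its girth is automatically even, so it is enough to rule out girth $4$. Pick an edge $\{u,v\}$ of $C$ and denote by $u_1, v_1, w_1, z_1$ the unique outside neighbors of $u,v,w,z$ respectively (these are distinct by $3$-connectedness).

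First I would construct the reduced graph $G_{u,v}$ of Figure~\ref{subfig:tutte1} by deleting the two endpoints $u,v$ of the chosen edge and adding two new edges $e,e'$ among the resulting degree-$2$ vertices $w, z, u_1, v_1$ in the unique way that preserves the bipartition (since $u,v$ sit on opposite sides of the bipartition, two of these four deficient vertices lie on each side, and this forces the matching $\{e,e'\}$). The preliminary verifications are that $G_{u,v}$ is cubic, simple, bipartite, and $3$-connected. Cubic-ness and bipartiteness are immediate; for simplicity I would re-select the edge of $C$ if a parallel edge would be created (since $C$ has four edges this is always possible). For $3$-connectedness I would argue contrapositively: any $\leq 2$-vertex cut of $G_{u,v}$ would lift to a small vertex cut of $G$ containing $\{u,v\}$, contradicting the hypothesis that $G$ is $3$-connected.

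By the minimality of $G$, since $|V(G_{u,v})| = |V(G)|-2$, the smaller graph $G_{u,v}$ admits a Hamiltonian cycle $H'$. The crux of the proof is then to lift $H'$ to a Hamiltonian cycle of $G$ by a case analysis on the intersection of $H'$ with $\{e, e'\}$ (this is the setting of Figure~\ref{subfig:tutte2}). When both $e$ and $e'$ lie in $H'$, I would replace each by the natural length-three path through $C$ that re-inserts $u$ or $v$ into the cycle; when exactly one of $e, e'$ lies in $H'$, I would replace it by a longer path through $C$ that sweeps up both $u$ and $v$ at once, using the remaining two edges of $C$. In either case the resulting edge set is a Hamiltonian cycle of $G$, contradicting our choice of $G$.

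The main obstacle is the remaining case where $H'$ uses neither $e$ nor $e'$: then $H'$ has no contact with the $4$-cycle region and the vertices $u,v$ cannot be absorbed. To handle this I would appeal to the freedom in choosing which edge of $C$ plays the role of $\{u,v\}$: the four edges of $C$ yield four candidate reductions, and a short combinatorial argument, showing that this bad case cannot occur simultaneously for all four choices (essentially because of a parity constraint on how any Hamiltonian cycle of $G_{u,v}$ meets the cut separating $\{u_1,v_1,w,z\}$ from the rest), allows one to always find a reduction in which the lifted Hamiltonian cycle exists. This bookkeeping across the four reductions is where I expect the argument to be most delicate.
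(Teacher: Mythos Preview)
Your reduction is essentially the paper's: delete two adjacent vertices of the $4$-cycle and splice in two ``cross'' edges. But you have swapped the easy and the hard step.

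\smallskip
\textbf{The lifting is never an obstacle.} In your ``bad case'' where $H'$ uses neither new edge, look at the remaining cycle vertex $w$: its three neighbours in $G_{u,v}$ are $z$, $w_1$, and $u_1$, so if $H'$ avoids $\{w,u_1\}$ it must use $\{w,z\}$ and $\{w,w_1\}$; symmetrically $H'$ uses $\{z,w\}$ and $\{z,z_1\}$. Hence $H'$ contains the edge $wz$, and replacing it by the path $w\,v\,u\,z$ (all edges of $G$) already gives a Hamiltonian cycle of $G$. No appeal to the other three reductions or to a parity argument is needed here; the case split on $|\{e,e'\}\cap H'|$ is routine in all three cases. (Incidentally, Figure~\ref{subfig:tutte2} in the paper is not about this lifting at all.)

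\smallskip
\textbf{The real gap is $3$-connectivity.} Your one-line contrapositive (``a $\le 2$-cut of $G_{u,v}$ lifts to a small cut of $G$ containing $\{u,v\}$'') does not work: adding back $u$ and $v$ can reconnect the two sides, so a $2$-cut $\{a,b\}$ of $G_{u,v}$ only yields the $4$-cut $\{a,b,u,v\}$ of $G$, which contradicts nothing. In fact a single reduction can genuinely fail to be $3$-connected: this happens precisely when $G$ has a $3$-edge-cut containing the two opposite edges $\{u,v\}$ and $\{w,z\}$ of $C$. The paper's fix is to run \emph{two} reductions in parallel, one for each of two adjacent edges of $C$ (their $G_{u,v}$ and $G_{v,w}$), and to show that if both failed to be $3$-connected then the two resulting $3$-edge-cuts of $G$ would intersect to produce a $2$-edge-cut of $G$ --- that is what Figure~\ref{subfig:tutte2} depicts. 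So at least one reduction is $3$-connected, and for that one the (always successful) lifting finishes the proof. Your plan needs exactly this ingredient in place of the hand-wave.
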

\begin{proof}
 Let $G$ be a $3$-connected, cubic, bipartite graph and $C=(u,v,w,x)$ a $4$-cycle in $G$. Denote by $u',v',w',x'$ the neighbors of $u,v,w,x$ outside $C$, respectively. Define $G_{u,v}$ to be the graph obtained from the vertex-deleted graph $G\setminus\{w,x\}$ by adding the edges $\{x',v\}$ and $\{w',u\}$ (see Figure~\ref{subfig:tutte1}). Similarly, define $G_{v,w}$. Both $G_{u,v}$ and $G_{v,w}$ are bipartite and cubic. A little case distinction easily yields, that if $G_{u,v}$ or $G_{v,w}$ has a Hamiltonian cycle, then so does $G$. To show this one has to consider how vertices $u',u,v,v',x',w'$ are used in a Hamiltonian cycle of one of these two graphs (say $G_{u,v}$). For example, consider the case when both paths $u'uw'$ and $x'vv'$ are used in a Hamiltonian cycle in $G_{u,v}$. Then either there exists a path $P_1$ from $x'$ to $w'$ disjoint with  $\{u',v',u,v\}$ (and consequently there is a similar path $P'_1$ from $v'$ to $u'$); or there exists a path $P_2$ from $x'$ to $u'$ disjoint with  $\{u,v,v',w'\}$ (and consequently there is a similar path $P'_2$ from $w'$ to $v'$). For the first subcase, the Hamiltonian cycle in $G$ is $u'uxx'P_1w'wvv'P'_1u'$. The second subcase is symmetric, the Hamiltonian cycle in $G$ being $u'P_2x'xww'P'_2v'vuu'$.

Finally, let us see that at least one of $G_{u,v}$, $G_{v,w}$ is $3$-connected. Indeed, if there is a $2$-edge cut in $G_{u,v}$, then there is a $3$-edge cut in $G$ containing $\{u,v\}$ and $\{w,x\}$ and some edge $e\notin C$, splitting $G$ in $A\ni u,x$ and $B\ni v,w$. Similarly, if there was a $2$-edge cut in $G_{v,w}$ there is a $3$-edge cut in $G$ with an edge $e'\notin C$ splitting $G$ in $A'\ni u,v$ and $B'\ni w,x$. Depending on which of $A', B'$ contains $e$ and which of $A, B$ contains $e'$, one of the cuts induced by $A\cap A'$, $A'\cap B$, $B\cap B'$, $B'\cap A$, consists of only two edges (see Figure~\ref{subfig:tutte2}). This contradicts the $3$-connectivity of $G$.

Therefore, if $G$ is a counterexample to Conjecture~\ref{conj:Tutte}, then at least one of the graphs $G_{u,v}$ or $G_{v,w}$ is also a counterexample. Hence $G$ cannot be a minimum counterexample.
\end{proof}

\subsection{Computational results for Conjecture~\ref{conj:Tutte}}
By Lemma~\ref{lem:girth6}, in order to verify Conjecture~\ref{conj:Tutte} for graphs of small order, we only have to check Hamiltonicity for cubic, bipartite, $3$-connected graphs of girth at least $6$. All cubic bipartite graphs on at most 34 vertices are available on-line (\url{https://hog.grinvin.org/Cubic}). The list was generated using Minibaum, a computer program written by Brinkmann~\cite{B96}. We also used Minibaum to generate bipartite, cubic graphs of girth at least 6 on at most 40 vertices. We verified that all 3-connected graphs among these are Hamiltonian by launching the Hamiltonicity test of SageMath~\cite{Sage} on each of them. Together with Lemma~\ref{lem:girth6}, this confirms that all bipartite, cubic, 3-connected graphs on at most 40 vertices are Hamiltonian. 

\subsection{Computational results for Conjectures~\ref{conj:Neumann-Lara'} and~\ref{conj:Hochstattler'}}

First, recall that for the cases of directed graphs, the instances of undirected graphs we are considering are always non-Hamiltonian, that is every perfect matching contains a cut.

In order to verify that for any orientation of a given undirected cubic graph $G$, there exists a perfect matching containing no directed cut, our algorithm proceeds by iterating through all perfect matchings of $G$. For each of them, all minimal cuts will be chosen consecutively to be directed and the algorithm will recurse. This exploration will either lead to a conflict, i.e., at some step of the algorithm the current perfect matching will contain only cuts having arcs into both directions; or will reduce the set of remaining perfect matchings to consider (since some of their cuts are already directed).

In fact, we observed that many perfect matchings in cubic graphs contain only one single cut and this speeds up the process, since no branching is needed for the choice of which cut to direct.

By the discussion in Section~\ref{sec:construction}, one can observe that for a given cubic graph $G=(V,E)$, it is enough to check whether an arbitrarily chosen edge $\{u,v\}$ of $G$ can be an $a$-arc i.e. the edges of every perfect matching $M$ containing $\{u,v\}$, can be oriented such that $M$ contains a directed cut. Since $G$ is cubic, the set of perfect matchings containing $\{u,v\}$ is exactly the set of perfect matchings of $G'=(V-\{u,v\},E)$. If $G$ cannot be oriented such that $\{u,v\}$ is an $a$-arc, then $G$ cannot be a counterexample. This allows a more efficient elimination of graph candidates.

The program was written in a Python-based language using tools provided by SageMath~\cite{Sage}.\footnote{The source code can be provided by the authors on request and is available
as supplementary material on the publisher's site.}

\bigskip

We verified that all cubic, 3-connected digraphs with at most 22 vertices have a perfect matching without a directed cut. We also checked that the only partially directed graph on 24 vertices admitting an orientation such that every perfect matching contains a directed cut, is the one of Figure~\ref{subfig:smallestcount}. Therefore, this is the unique  smallest counterexample to Conjecture~\ref{conj:Hochstattler'}. 

\bigskip

A counterexample to Conjecture~\ref{conj:Neumann-Lara'} has to be an orientation of a counterexample to Conjecture~\ref{conj:Tait}. The list of all triangle-free counterexamples to Conjecture~\ref{conj:Tait} (up to a certain order) can be found on McKay's webpage~\cite{McKayWeb}, see Table~\ref{tab:NeumannLara}. We use the list as a base for our computation. By item 1 of Lemma~\ref{lem:girth4}, our experimental results show that Conjecture~\ref{conj:Neumann-Lara'} holds for planar cubic, 3-connected graphs on at most 48 vertices. In particular, Conjecture~\ref{conj:Hochstattler} holds for planar cubic graphs on at most 48 vertices. Recall that by Euler's formula, a planar cubic graph on $n$ vertices has $2+\frac{n}{2}$ faces. Now, by planar duality and Lemma~\ref{lem:duality}, we conclude that Conjecture~\ref{conj:Neumann-Lara} holds for planar triangulations on at most $2+\frac{48}{2}=26$ vertices. Since, removing edges does not create directed cycles, we conclude:

\begin{proposition}\label{prop:NeumannLara26}
Conjecture~\ref{conj:Neumann-Lara} holds for planar graphs on at most 26 vertices. 
\end{proposition}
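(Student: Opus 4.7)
The plan is to reduce the statement, via planar duality, to the computational verification of Conjecture~\ref{conj:Neumann-Lara'} for cubic, $3$-connected, planar digraphs on at most $48$ vertices, which is established in the paragraph preceding the proposition. First I would argue that it suffices to treat planar oriented \emph{triangulations} on at most $26$ vertices. Given an arbitrary planar oriented graph $D$ with $|V(D)|\le 26$, one fixes a planar embedding of $D$ and adds chords to each non-triangular face. Because $D$, being oriented, is simple as an undirected graph, no new chord duplicates an existing edge; orienting each new edge arbitrarily yields a planar oriented triangulation $T$ on the same vertex set. Any acyclic vertex $2$-partition of $T$ restricts to one of $D$, since deleting arcs cannot create new directed cycles.

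Next I would pass to the dual. For a simple planar triangulation $T$ on $n\le 26$ vertices, Euler's formula yields $|V(T^*)|=2n-4\le 48$, and $T^*$ is cubic since each face of $T$ is a triangle. By Lemma~\ref{lem:duality}, simplicity of $T$ implies that $T^*$ is $3$-edge-connected; for simple cubic graphs this coincides with $3$-vertex-connectivity. Hence $T^*$ is a cubic, $3$-connected, planar digraph of order at most $48$.

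Finally, I would invoke the computational result: $T^*$ admits a perfect matching $M$ containing no directed cut. The complement $A(T^*)\setminus M$ is a $2$-regular spanning subgraph of $T^*$, hence an even subgraph, and the observation at the end of the proof of Lemma~\ref{lem:cubic} states that $M$ contains no directed cut if and only if $T^*/(A(T^*)\setminus M)$ is strongly connected. Reading the chain of equivalences in Lemma~\ref{lem:duality} in the reverse direction now produces an acyclic vertex $2$-partition of $T$, and therefore of $D$. The substantive effort is concentrated in the computational verification for graphs on $48$ vertices; the only modest subtlety in writing out the argument is the initial triangulation step, where one must use that $D$ is already simple as an undirected graph so that no new chord is antiparallel to an existing arc.
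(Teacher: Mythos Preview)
Your proof is correct and follows the same route as the paper: reduce to triangulations, pass to the cubic $3$-connected planar dual on at most $48$ vertices, and invoke the computational verification of Conjecture~\ref{conj:Neumann-Lara'} together with Lemmas~\ref{lem:duality} and~\ref{lem:cubic}. The paper phrases the triangulation step dually (``removing edges does not create directed cycles'') and computes the vertex bound from the cubic side ($2+n/2$ faces for a cubic planar graph on $n$ vertices), but the content is identical.
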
 

Moreover, from columns 3, 4 and 5 of Table~\ref{tab:NeumannLara}, we can deduce that Conjecture~\ref{conj:Neumann-Lara} holds for 4-vertex-connected planar graphs on at most 27 vertices and for 4-vertex connected planar graphs with minimum degree 5 on at most 39 vertices. 

\afterpage{
\begin{table}[!ht]
\centering
	\begin{footnotesize}
	\begin{tabular}{|c|c|c|c|c|}
	 \hline
	\textbf{Nr of} & \textbf{No} & \textbf{Cyclically} & \textbf{no faces of size 3,4},& \textbf{Cyclically} \\
	 \textbf{vertices} & \textbf{restriction} & \textbf{4-connected} & \textbf{cyclic connectivity exactly 4}  & \textbf{5-connected} \\
	\hline
	\hline
	$\leq 36$ & 0 & 0 & 0 & 0\\
	$38$ & 6 & 0 & 0 & 0 \\
	$40$ & 37 & 0 & 0 & 0 \\
	$42$ & 277 & 3 & 0 & 0 \\
	$44$ & 1732 & 3 & 1 & 1 \\
	$46$ & 11204 & 19 & 3 & 1 \\
	$48$ & 70614 & 30 & 1 & 0 \\
	$50$ & ? & 126 & 3 & 3 \\
	$52$ & ? & ? & 6 & 6 \\
	$54$ & ? & ? & 12 & 2 \\
	$56$ & ? & ? & 49 & 22 \\
	$58$ & ? & ? & 126 & 37 \\
	$60$ & ? & ? & 214 & 31 \\
	$62$ & ? & ? & 659 & 194\\
	$64$ & ? & ? & 1467 & 298 \\
	$66$ & ? & ? & 3247 & 306 \\
	$68$ & ? & ? & 9187 & 1538 \\
	$70$ & ? & ? & 22069 & 2566 \\
	$72$ & ? & ? & 50514 & 3091 \\
	$74$ & ? & ? & 137787 & 13487 \\
	$76$ & ? & ? & 339804 & ? \\
	\hline
	\hline 
	\end{tabular}
	\end{footnotesize}
\caption[Webpage of B. McKay]{The counterexamples to Tait's conjecture from~\cite{McKayWeb} on which Conjecture~\ref{conj:Neumann-Lara'} was verified.}
\label{tab:NeumannLara}
\end{table}

}

\bigskip

Recall that if $G$ admits a partial orientation $D$ with an $a$-arc, then $D$ can be used as a gadget described in Section~\ref{sec:construction} for building counterexamples.
Also recall that the construction of the graph $\widetilde{D}$ (Figure~\ref{subfig:a-arcs-constructions3}) preserves planarity. Thus, we get:

\begin{observation}
If Conjecture~\ref{conj:Neumann-Lara} is true, then no planar, 3-connected, cubic digraph has an $a$-arc.
\end{observation}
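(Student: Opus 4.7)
The plan is to prove the contrapositive: if some planar, 3-connected, cubic digraph $D$ has an $a$-arc, then Conjecture~\ref{conj:Neumann-Lara} fails. By the equivalence of Conjecture~\ref{conj:Neumann-Lara} and Conjecture~\ref{conj:Neumann-Lara'} established in Section~\ref{sec:alltogether}, it suffices to exhibit a planar, 3-connected, cubic digraph in which every perfect matching contains a directed cut. The natural candidate is the graph $\widetilde{D}$ obtained from $D$ by the second construction of Section~\ref{sec:construction}, since that construction is exactly designed for this purpose and was already noted to preserve planarity.

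First, I would apply the construction $\widetilde{D}$ to the given $D$ with specified $a$-arc $(u,v)$. From Section~\ref{sec:construction} we already know that $\widetilde{D}$ is 3-connected and cubic, and that the construction preserves planarity. Lemma~\ref{lem:allPMhaveACut} guarantees that every perfect matching of $\widetilde{D}$ contains a directed cut. The only subtlety is that, starting from a fully oriented digraph $D$, the construction still leaves certain edges of the $Q_3$-skeleton undirected, so strictly speaking $\widetilde{D}$ is only partially oriented and not yet a valid input to Conjecture~\ref{conj:Neumann-Lara'}.

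To finish, I would extend $\widetilde{D}$ to a fully oriented digraph $\widetilde{D}^{o}$ by orienting each remaining edge in an arbitrary direction; this changes neither the underlying graph, nor its planarity, cubicity, or 3-connectivity. The key observation is that, inspecting the proof of Lemma~\ref{lem:allPMhaveACut}, the directed cut certified in any perfect matching $M$ of $\widetilde{D}$ is obtained as a directed cut inside some copy $D'_i$, hence consists entirely of arcs that were already oriented in $\widetilde{D}$ itself. Such a cut remains a directed cut in $\widetilde{D}^{o}$ irrespective of how the remaining $Q_3$-edges are oriented, so every perfect matching of $\widetilde{D}^{o}$ still contains a directed cut. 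Hence $\widetilde{D}^{o}$ is a counterexample to Conjecture~\ref{conj:Neumann-Lara'}, and therefore to Conjecture~\ref{conj:Neumann-Lara}.

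The main obstacle, and the point I would write out most carefully, is justifying the persistence of the directed cut under the arbitrary extension to a full orientation; everything else is bookkeeping since the preservation of planarity, cubicity and 3-connectivity of $\widetilde{D}$ has already been recorded, and the equivalence between the planar versions of Conjecture~\ref{conj:Neumann-Lara} and Conjecture~\ref{conj:Neumann-Lara'} has been proved in the previous section. No appeal to Lemma~\ref{lem:end} is needed here because $\widetilde{D}^{o}$ is already cubic.
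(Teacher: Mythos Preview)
Your proposal is correct and follows exactly the route the paper intends: apply the $\widetilde{D}$ construction (which preserves planarity) together with Lemma~\ref{lem:allPMhaveACut} and the equivalence of Conjectures~\ref{conj:Neumann-Lara} and~\ref{conj:Neumann-Lara'}; indeed, the paper states this Observation without proof, treating it as immediate from the preceding discussion. Your extra care about extending the partial orientation of $\widetilde{D}$ to a full one is sound but can be shortened: by definition a directed cut in a partially directed graph consists only of already-oriented arcs, so it automatically survives any extension of the orientation, and there is no need to trace it back into a specific copy $D'_i$.
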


Hence, one way to build a counterexample to Conjecture~\ref{conj:Neumann-Lara} is to find a planar, 3-connected, cubic digraph having an $a$-arc. Since an $a$-arc corresponds to an $a$-edge in the underlying undirected graph, we considered planar cubic graphs having an $a$-edge. Moreover, according to item 2 of Lemma~\ref{lem:girth4}, only graphs of girth 4 need to be considered. The list of such graphs of order at most 38 was generated by McKay and provided on his webpage~\cite{McKayWeb}. Our computations show that none of them can be oriented such that it has an $a$-arc.

\section{Conclusion}
\label{sec:concl}
We have disproved Conjecture~\ref{conj:Hochstattler}. For its variant, Conjecture~\ref{conj:Hochstattler'}, we found the unique smallest counterexample on 24 vertices. However, our counterexample to Conjecture~\ref{conj:Hochstattler} is quite large. Is there a counterexample to Conjecture~\ref{conj:Hochstattler} on less than 122 vertices? Similarly for Conjecture~\ref{conj:Tutte}, the smallest known counterexample is on 50 vertices and we have confirmed that this conjecture is true for graphs on at most 40 vertices. What is the size of a smallest counterexample to Conjecture~\ref{conj:Tutte} ? 

As a common weakening of Conjectures~\ref{conj:Hochstattler'} and~\ref{conj:Tutte} we proposed Conjecture~\ref{conj:KV}: \emph{Every 3-connected, cubic, bipartite digraph  contains a perfect matching without directed cut.} Since Conjecture~\ref{conj:KV} is weaker than Conjecture~\ref{conj:Tutte}, we know that it holds for all graphs on at most 40 vertices. Furthermore, we verified by computer that no orientation of the known\footnote{see \url{http://mathworld.wolfram.com/TutteConjecture.html}} small counterexamples to Conjecture~\ref{conj:Tutte} is a counterexample to Conjecture~\ref{conj:KV}.

Note that the weakest statement in Figure~\ref{fig:diagram} (corresponding to the minimum in the diagram) is a natural open case of Conjecture~\ref{conj:Neumann-Lara}: \emph{Every orientation of a planar Eulerian triangulation can be vertex-partitioned into two acyclic sets.}

A well-known parameter of cubic graphs is \emph{cyclic connectivity}, i.e., the size of a smallest \emph{cyclic cut}. A cyclic cut is a minimal edge-cut, such that after its deletion both components have a cycle. Any minimum cyclic cut forms a matching and all edge-cuts that are matchings are cyclic cuts. Clearly, if all cyclic cuts are large, then it is harder to have a cut in every perfect matching. That is why it is natural to consider any of the conjectures on cubic graphs in this paper for high cyclic connectivity.

While Conjecture~\ref{conj:Tait} is false for all possible values of cyclic connectivity (planar graphs have cyclic connectivity at most $5$), the highest cyclic connectivity among the known counterexamples to Conjectures~\ref{conj:Tutte} and~\ref{conj:Hochstattler'} is $4$ and $3$, respectively.
Indeed, it is not even known whether there exist general non-Hamiltonian, 3-connected, cubic graphs of high cyclic connectivity. The existence of such graphs is conjectured in~\cite{H14} while according to~\cite{MM16}, Thomassen conjectured the contrary (in personal communication in 1991). The 28-vertex Coxeter graph has cyclic connectivity $7$, witnessing the largest known cyclic connectivity among non-Hamiltonian cubic 3-connected graphs. We checked that to attain cyclic connectivity 8 and 9, at least 48 and 66 vertices are needed, respectively.

A (probably novel) parameter that is still closer to these problems is the size of a smallest edge-cut contained in a perfect matching. We believe that it deserves further investigation.

\subsubsection*{Acknowledgements} We thank Winfried Hochst\"attler and Raphael Steiner for reading and commenting an earlier version of this manuscript. We also thank Brendan McKay for making available on-line some data on planar graphs on our request. We thank the reviewers for their careful reading and valuable suggestions which helped improving this paper.
The first author was supported by ANR GATO ANR-16-CE40-0009-01.

\end{document}